\newtheorem{theorem}{Theorem}
\newtheorem{remark}{Remark}[subsection]
\newtheorem{proposition}{Proposition}
\newtheorem{lemma}{Lemma}
\newtheorem{ha}{Harmonic Analysis Result}
\newtheorem{ep}{Euler Product Result}
\newtheorem{nt}{Number Theory Result}
\newtheorem{probres}{Probability Result}
\newcommand{\N}{\mathbb{N}}
\newcommand{\R}{\mathbb{R}}
\newcommand{\E}{\mathbb{E}}
\newcommand{\p}{\mathbb{P}}
\newcommand{\var}{\mathrm{Var}}
\author{Seth Hardy}
\title[Almost sure bounds for a Steinhaus random multiplicative function]{Almost sure bounds For a weighted Steinhaus random multiplicative function}
\address{Mathematics Institute, Zeeman Building, University of Warwick, Coventry CV4 7AL, England}
\email{Seth.Hardy@warwick.ac.uk}
\date{\today}
\thanks{The author is supported by the Swinnerton-Dyer scholarship at the Warwick Mathematics Institute Centre for Doctoral Training.}
\begin{document}
\maketitle

%% Abstract
\begin{abstract}
We obtain almost sure bounds for the weighted sum $\sum_{n \leq t} \frac{f(n)}{\sqrt{n}}$, where $f(n)$ is a Steinhaus random multiplicative function. Specifically, we obtain the bounds predicted by exponentiating the law of the iterated logarithm, giving sharp upper and lower bounds.
\end{abstract}

%% Intro
\section{Introduction}
The \emph{Steinhaus random variable} is a complex random variable that is uniformly distributed on the unit circle $\{ z : \, |z| = 1 \}$ in the complex plane. Letting $(f(p))_{p \text{ prime}}$ be independent Steinhaus random variables, we define the \emph{Steinhaus random multiplicative function} to be the (completely) multiplicative extension of $f$ to the natural numbers. That is \[ f(n) = \prod_{p \mid n} f(p)^{v_p(n)}, \] where $v_p (n)$ is the $p \,$-adic valuation of $n$. Weighted sums of Steinhaus $f(n)$ were studied in recent work of \citet{AHZ} as a model for the Riemann zeta function on the critical line. Noting that \[ \zeta(1/2 + it) = \sum_{n \leq |t|} \frac{1}{n^{1/2 + it}} + o(1), \] they modelled the zeta function at height $t$ on the critical line by the function
 \[ M_f (t) \coloneqq \sum_{n \leq t} \frac{f(n)}{\sqrt{n}}, \]
 for $f$ a Steinhaus random multiplicative function. The motivation for this model is that the function $n^{-it}$ is multiplicative, it takes values on the complex unit circle, and $(p^{-it})_{p \text{ prime}}$ are asymptotically independent for any finite collection of primes. \\

In their work studying $M_f (t)$, Aymone, Heap, and Zhao proved an upper bound analogous to a conjecture of \citet{FGH} on the size of the zeta function on the critical line, which states that 
\[ \max_{t \in [T,2T]} |\zeta(1/2 + it)| = \exp\Biggl( (1 + o(1)) \sqrt{\frac{1}{2} \log T \log \log T} \Biggr) .\]
Due to the oscillations of the zeta function, the events that model this maximum size involve sampling $T \log T$ independent copies of $M_f (t)$. \\

Despite being the ``wrong'' object to study with regards to the maximum of the zeta function, one may also wish to find the correct size for the \emph{almost sure} large fluctuations of $M_f(x)$, since this is an interesting problem in the theory of random multiplicative functions. In this direction, Aymone, Heap, and Zhao obtained an upper bound of
\[ M_f (x) \ll (\log x)^{1/2 + \varepsilon}, \]
almost surely, for any $\varepsilon > 0$. This is on the level of squareroot cancellation, since $M_f(x)$ has variance of approximately $\log x$. Furthermore, they obtained the lower bound that for any $L>0$,
\[ \limsup_{x \rightarrow \infty} \frac{|M_f (x)|}{\exp\bigl((L+o(1))\sqrt{\log \log x}\bigr)} \geq 1, \]
almost surely. If close to optimal, this lower bound demonstrates a far greater degree of cancellation than the upper bound, and suggests that $M_f$ is being dictated by its Euler product. One may expect that
\[ |M_f (x)| \approx \biggl|\prod_{p \leq x} \bigl( 1 - f(p)/\sqrt{p} \bigr)^{-1} \biggr| \approx \exp \Biggl(\sum_{p \leq x} \frac{\Re f(p) }{ \sqrt{p}} \Biggr),  \]
and the law of the iterated logarithm (see, for example, \citet{Gut_2013}, chapter 8) suggests that
\[ \limsup_{x \rightarrow \infty} \frac{\sum_{p \leq x} \Re ( f(p) ) / \sqrt{p}}{\sqrt{\log_2 x \log_4 x}} = 1 \, ,\]
where $\log_k$ denotes the $k$-fold iterated logarithm. In this paper we prove the following results, which confirm the strong relation between $M_f (x)$ and the Euler product of $f$.
\begin{theorem}[Upper Bound] \label{T:1}
For any $\varepsilon > 0$, we have 
\[M_f (x) \ll \exp{\bigl((1+\varepsilon)\sqrt{ \log_2 x \log_4 x}\bigr)} \, , \]
almost surely.
\end{theorem}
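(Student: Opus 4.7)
The plan is to decompose $M_f(x)$ into a truncated Euler product (governed by the law of the iterated logarithm on primes) and a remainder (controlled by high moment methods), then combine the two via Borel-Cantelli along a sparse discrete sequence.

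First I would reduce from continuous $x$ to a discrete sequence $x_k \to \infty$, chosen sparse enough that the resulting tail probabilities are summable. Interpolation between consecutive $x_k$ is handled by $\var(M_f(x) - M_f(x_k)) = \sum_{x_k < n \le x} n^{-1}$ together with Chebyshev and a union bound, provided $x_{k+1}/x_k \to 1$ sufficiently fast. It then suffices to establish the desired tail bound at each $x = x_k$.

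For such a fixed $x$, I would pick a truncation parameter $y = y(x)$ with $\log y = \log x / (\log_2 x)^A$ for some large $A > 0$, so that $\log_2 y = \log_2 x - A \log_3 x + O(1)$. Writing each $n$ uniquely as $n = mk$ with $m$ composed only of primes $\le y$ and $k$ only of primes $> y$, and summing over $m$ first, leads to the heuristic decomposition
\[M_f(x) = E_y \cdot T(x,y) - R(x,y), \qquad E_y := \prod_{p \le y}\bigl(1 - f(p)/\sqrt{p}\bigr)^{-1},\]
where $T(x,y)$ sums $f(k)/\sqrt{k}$ over $k \le x$ with all prime factors $> y$, and $R(x,y)$ captures the overflow caused by the constraint $mk \le x$. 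The factor $E_y$ has $\log|E_y| = \sum_{p \le y} \Re f(p)/\sqrt{p} + O(1)$ almost surely (the $O(1)$ coming from the higher-order terms in the logarithmic expansion, convergent almost surely by Kolmogorov's three-series theorem), so the law of the iterated logarithm applied to this independent sum of variance $\tfrac12 \log_2 y + O(1)$ yields $|E_y| \le \exp((1+\varepsilon/2)\sqrt{\log_2 x \log_4 x})$ almost surely.

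The technical heart is then to estimate the remaining factors. The Dirichlet polynomial $T(x,y)$ has second moment $\sum_{k \le x, \, y\text{-rough}} 1/k \ll \log_3 x$, and therefore contributes at most $\exp(O(\log_4 x))$ to the final bound via a moderate-moment estimate. The overflow term $R(x,y)$ is a multilinear expression in the independent families $(f(p))_{p \le y}$ and $(f(p))_{p > y}$; its $2q$-th moment, for $q$ of order $\varepsilon^2 \log_2 x / \log_4 x$, should be estimated by enumerating multiplicatively matched tuples and applying Rankin's trick, yielding $\E|R(x,y)|^{2q} \ll \exp(o(q \sqrt{\log_2 x \log_4 x}))$. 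Markov's inequality then produces a summable tail, closing the Borel-Cantelli argument. The hard part is executing the moment bound on $R(x,y)$ sharply enough: a naive second-moment argument only recovers the weaker $(\log x)^{1/2+\varepsilon}$ bound of Aymone, Heap, and Zhao, so one must exploit the independence between $(f(p))_{p \le y}$ and $(f(p))_{p > y}$, together with the fact that $R(x,y)$ has no "diagonal" mass on the Euler product scale, to extract the exponential savings predicted by the heuristic.
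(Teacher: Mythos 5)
Your overall architecture (discretize, extract a smooth Euler product, control it by the law of the iterated logarithm, kill the rest by moments and Borel--Cantelli) is the right one, but the central step of your plan fails as stated. The claimed bound $\E|R(x,y)|^{2q}\ll\exp\bigl(o(q\sqrt{\log_2 x\log_4 x})\bigr)$ is provably false. Writing out your overflow term, $R(x,y)=\sum_{k\le x,\,P^-(k)>y}\frac{f(k)}{\sqrt k}\sum_{m>x/k,\ P(m)\le y}\frac{f(m)}{\sqrt m}$, and for $k$ in the top dyadic ranges $(x/2^{j+1},x/2^j]$ the inner sum is essentially the \emph{entire} Euler product $E_y$ minus finitely many terms; so $R$ has plenty of ``diagonal mass on the Euler product scale''. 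Concretely, by multiplicative independence of the rough and smooth parts, $\E|R|^2=\sum_k\frac1k\sum_{m>x/k}\frac1m\gg\log y\gg\log x/(\log_2 x)^A$, since each of the $\asymp\log y$ top dyadic blocks of $k$ contributes $\asymp 1$. Then Jensen gives $\E|R|^{2q}\ge(\E|R|^2)^q=\exp\bigl((1+o(1))q\log_2 x\bigr)$, which swamps the target $\exp\bigl(2q(1+\varepsilon)\sqrt{\log_2 x\log_4 x}\bigr)$ for every $q\ge1$: Markov's inequality cannot close the argument for any choice of $q$, let alone $q\asymp\log_2 x/\log_4 x$. This is the same heavy-tail phenomenon that makes $\E|M_f(x)|^2=\log x$ useless for the sharp almost sure bound. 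The paper's way around it is to insert the indicator of ``good'' events \emph{inside} the expectation before applying Markov: one conditions on the $\sigma$-algebra generated by $(f(p))_{p\le y_{j-1}}$ and multiplies by $\mathbf 1_{G_{j,l}\cap I_{j,l}}$, where $G_{j,l}$ caps the Euler products at their LIL size and $I_{j,l}$ caps certain integral averages of $|F_{y_{j-1}}(1/2+1/\log X_l+it)|^2$; the complements of these events are then shown separately to have summable probability, the Euler-product event only over a very sparse scale $\tilde X_k=\exp(\exp(\rho^k))$ so that the union bound is affordable. Some analogue of this truncation is unavoidable in your scheme too.

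Two further points. First, your second moment for $T(x,y)$ is miscomputed: $\sum_{k\le x,\,P^-(k)>y}1/k\asymp\log x/\log y=(\log_2 x)^A$, not $\log_3 x$ (you have computed $\sum_{y<p\le x}1/p$ instead); and even with the correct value, controlling $|T|$ at every test point by unconditioned moments plus a union bound over $\exp(O(\log_2 x))$-many points costs $\exp\bigl(O(\sqrt{\log_2 x\cdot\log_2 x})\bigr)$-type losses unless the rough range $(y,x]$ is itself cut into thin slices $(y_{j-1},y_j]$ with $\log\log y_j-\log\log y_{j-1}=\alpha$ small, which is exactly the paper's multiscale splitting $S_{i,0}+\sum_jS_{i,j}$; the truncation error in each slice is then handled via Parseval (Harmonic Analysis Result~\ref{ha-result}) rather than by completing sums. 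Second, your interpolation between consecutive test points by ``Chebyshev and a union bound'' does not suffice: the increments over the integers in $(x_k,x_{k+1}]$ are not independent, there are exponentially many of them, and a pointwise Chebyshev bound is not summable over them; one needs a genuine maximal inequality, which the paper obtains by the fourth-moment chaining argument of Lau--Tenenbaum--Wu (Lemma~\ref{L:Max}).
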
 
\begin{theorem}[Lower Bound] \label{T:2}
For any $\varepsilon > 0$, we have 
\[ \limsup_{x\rightarrow \infty} \frac{|M_f(x)|}{ \exp{\bigl((1-\varepsilon)\sqrt{ \log_2 x \log_4 x}\bigr)}} \geq 1 \, ,\]
almost surely.
\end{theorem}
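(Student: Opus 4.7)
The guiding idea is that $|M_f(x)| \approx |P(y)|$, with $P(y) := \prod_{p \leq y}(1-f(p)/\sqrt{p})^{-1}$ for an appropriate cutoff $y$, and that the magnitude of $|P(y)|$ obeys a law of the iterated logarithm. I would proceed in three stages: (i) show $\log|P(y)| = S(y) + O(1)$ a.s., where $S(y) := \sum_{p \leq y} \Re f(p)/\sqrt{p}$; (ii) realize the LIL-size lower bound for $S(y)$ along a deterministic subsequence $y_k$ via a CLT-on-increments plus Borel--Cantelli argument; (iii) transfer the lower bound from $|P(y_k)|$ to $|M_f(x_k)|$ through a conditional second moment (Paley--Zygmund) argument.

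Stage (i) follows from the expansion $-\log(1-f(p)/\sqrt{p}) = f(p)/\sqrt{p} + f(p)^2/(2p) + O(p^{-3/2})$: the quadratic sum $\sum_p f(p)^2/(2p)$ converges a.s.\ by Kolmogorov's three-series theorem, and the cubic and higher terms are absolutely summable, so $\log|P(y)| = S(y) + O(1)$ a.s. For (ii), $S(y)$ is a sum of independent, bounded, mean-zero variables with $\var(S(y)) = \tfrac{1}{2}\log_2 y + O(1)$. I would choose a sparse sequence $y_k$ (say $\log_2 y_k = 2^k$) so that the independent increments $S(y_k) - S(y_{k-1})$ have variance $\asymp \var(S(y_k))$; the CLT then gives $\p\bigl(S(y_k) - S(y_{k-1}) \geq (1-\tfrac{\varepsilon}{3})\sqrt{\log_2 y_k \log_4 y_k}\bigr) \gg (\log_3 y_k)^{-(1-\varepsilon/3)^2}/\sqrt{\log_4 y_k}$, which is not summable in $k$, so the second Borel--Cantelli lemma provides infinitely many such $k$ almost surely. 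Combined with (i), this yields $|P(y_k)| \geq \exp\bigl((1-\varepsilon/2)\sqrt{\log_2 y_k \log_4 y_k}\bigr)$ infinitely often a.s.

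For (iii), pick $x_k$ satisfying $\log_2 x_k \sim \log_2 y_k$ while keeping the $y_k$-smooth tail above $x_k$ negligible (for instance $x_k = y_k^{\log_3 y_k}$). Decompose $M_f(x_k) = T(x_k) + R(x_k)$ into the $y_k$-smooth and $y_k$-rough contributions. By Steinhaus orthogonality, $\E[M_f(x_k) \mid \mathcal{F}_{y_k}] = T(x_k)$ and
\[
\E\bigl[|R(x_k)|^2 \bigm| \mathcal{F}_{y_k}\bigr] = \sum_{\substack{b > 1,\, b \leq x_k\\ b\ y_k\text{-rough}}} \frac{|T(x_k/b)|^2}{b} \ll |P(y_k)|^2 \cdot \frac{\log x_k}{\log y_k}.
\]
On the LIL event of stage (ii), $T(x_k) \approx P(y_k)$; a conditional Paley--Zygmund inequality, supported by a matching fourth-moment bound $\E[|M_f(x_k)|^4 \mid \mathcal{F}_{y_k}] \ll (\E[|M_f(x_k)|^2 \mid \mathcal{F}_{y_k}])^2$, then produces $|M_f(x_k)| \geq (1-\varepsilon)|P(y_k)|$ with conditional probability at least some $\delta > 0$. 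A final Borel--Cantelli argument, decoupling across $k$ by using only the prime window $(y_{k-1}, y_k]$ to generate the event at step $k$, concludes the almost sure lower bound.

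The hardest step is the fourth-moment estimate required for Paley--Zygmund: its proof reduces to counting quadruples $(n_1, n_2, n_3, n_4)$ of integers $\leq x$ whose $y$-rough parts satisfy $n_1 n_3 = n_2 n_4$, summed against a $y$-smooth weight determined by $\mathcal{F}_y$, and one must check that this sum does not exceed $(\E[|M_f(x)|^2 \mid \mathcal{F}_y])^2$ by more than a constant on the LIL event. A secondary but delicate issue is arranging the decoupling into disjoint prime windows so that only a negligible fraction of the LIL exponent is lost, preserving the sharp constant $1-\varepsilon$.
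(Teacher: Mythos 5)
Your stages (i) and (ii) are in the spirit of the paper, but stage (iii) --- the transfer from the Euler product to $M_f$ itself --- contains a genuine gap, and it is precisely the step where the paper does something different. The conditional fourth-moment bound $\E[|M_f(x)|^4\mid\mathcal{F}_y]\ll(\E[|M_f(x)|^2\mid\mathcal{F}_y])^2$ that your Paley--Zygmund argument requires is false in the regime you need. Writing $M_f(x)=\sum_b f(b)T(x/b)/\sqrt{b}$ over $y$-rough $b$, the off-diagonal solutions of $b_1b_3=b_2b_4$ contribute on the order of $(\log x/\log y)^4$ (times the size of the $T$'s), against $(\log x/\log y)^2$ from the diagonal; this is the conditional analogue of the fact that $\E|M_f(x)|^4\asymp(\log x)^4$ while $(\E|M_f(x)|^2)^2\asymp(\log x)^2$. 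So the moment ratio is $\asymp(\log x/\log y)^2$, and Paley--Zygmund only gives conditional success probability $\gg(\log y/\log x)^2$. You cannot take $\log x$ close to $\log y$ to fix this: by Rankin's trick the smooth tail $\sum_{n>x,\,P(n)\le y}f(n)/\sqrt{n}$ has second moment $\asymp x^{-1/\log y}\log y$, so making $T(x)\approx F_y(1/2)$ requires $\log x/\log y\gtrsim\log_2 y$ (your choice $x_k=y_k^{\log_3 y_k}$ leaves a tail of $L^2$-size about $\sqrt{\log y_k/\log_2 y_k}$, which dwarfs $\exp(\sqrt{\log_2 y_k\log_4 y_k})$). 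With $\log x_k/\log y_k\gtrsim\log_2 y_k$ the conditional success probability is $\ll(\log_2 y_k)^{-2}$, the joint probabilities become summable, and the second Borel--Cantelli lemma no longer applies. The paper avoids pointwise concentration entirely: it lower-bounds $\max_{t\in[T_{k-1},T_k]}|M_f(t)|^2$ by $\frac{1}{\log T_k}\int_{T_{k-1}}^{T_k}|M_f(t)|^2t^{-1-\sigma_k}\,dt$, completes the integral (using Theorem \ref{T:1} below and Markov above), applies the Plancherel identity (Harmonic Analysis Result \ref{ha-result}), and then uses Jensen's inequality on a short $t$-window to extract $\exp(2\sum_{p\le T_k}\Re f(p)/\sqrt{p})$ pathwise --- no fourth moments and no positive-probability event are needed for the transfer, only for the LIL step itself.

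Two secondary problems. First, in stage (ii) the sparseness $\log_2 y_k=2^k$ is too mild: the increment $S(y_k)-S(y_{k-1})$ then carries only half the variance of $S(y_k)$, so asking it alone to exceed $(1-\varepsilon/3)\sqrt{\log_2 y_k\log_4 y_k}$ gives Gaussian tail $(\log_3 y_k)^{-2(1-\varepsilon/3)^2}\asymp k^{-2(1-\varepsilon/3)^2}$, which is summable; your stated lower bound $(\log_3 y_k)^{-(1-\varepsilon/3)^2}$ presumes the increment has essentially full variance. One must take $\log_2 y_k=\lambda^k$ with $\lambda$ large in terms of $\varepsilon$ (as the paper does with $T_k$), and also control $S(y_{k-1})$ from below when completing the increment to the full sum. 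Second, your decoupling is not into independent events: the randomness driving the Paley--Zygmund step at stage $k$ lives on primes in $(y_k,x_k]$, which overlap the window generating the LIL increment at stage $k+1$, so the plain second Borel--Cantelli lemma does not apply as stated and a conditional version would be needed.
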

These are the best possible results one could hope for, with upper and lower bounds of the same shape, matching the law of the iterated logarithm. \\

One of the most celebrated upper bound results in the literature is that of \citet{LTW}, who found an upper bound for unweighted partial sums of the Rademacher multiplicative function. Originally introduced by \citet{wintner1944random} as a model for the Möbius function, the Rademacher random multiplicative function is the multiplicative function supported on square-free integers, with $(f(p))_{p \text{ prime}}$ independent and taking values $\{ -1, 1 \}$ with probability $1/2$ each. In this paper, Wintner showed that for Rademacher $f$ we have roughly squareroot cancellation, in that
\[ \sum_{n \leq x} f(n) \ll x^{1/2 + \varepsilon}, \]
almost surely, for any $\varepsilon>0$. Lau, Tenenbaum, and Wu obtained a far more precise result, proving that for Rademacher $f$,
\[ \sum_{n \leq x} f(n) \ll \sqrt{x}(\log \log x)^{2 + \varepsilon},\]
almost surely, for any $\varepsilon>0$, and recent work of \citet{caich2023sure} has improved this result. Indeed, we find that similar techniques to those of Lau, Tenenbaum, and Wu, as well as more recent work on connecting random multiplicative functions to their Euler products (see \citet{harper2020moments}) lead to improvements over the bounds from \citet{AHZ}. Note that the weights $\frac{1}{\sqrt{n}}$ in the sum $M_f (x)$ give a far stronger relation to the underlying Euler product of $f$ than in the unweighted case, so finding the ``true size'' of large fluctuations is relatively more straightforward.
\subsection{Outline of the proof of Theorem \ref{T:1}}
For the proof of the upper bound we first partition the natural numbers into intervals, say $[x_{i-1},x_i)$, so that $M_f (x)$ doesn't vary too much over these intervals. If the fluctuations of $M_f (x)$ between test points $(x_i)$ is small enough, then it suffices to just get an upper bound only on these $(x_i)$. This is the approach taken by both \citet{AHZ} and \citet{LTW}. The latter took this a step further and considered each test point $x_i$ as lying inside some larger interval, say $[X_{l-1}, X_l)$. These larger intervals determine the initial splitting of our sum, which takes the shape 
\[ M_f (x_i) = \sum_{\substack{n \leq x_i \\ P(n) \leq y_0}} \frac{f(n)}{\sqrt{n}} + \sum_{\substack{y_{j-1} < m \leq x_i \\ p | m \Rightarrow p \in (y_{j-1}, y_j]}} \frac{f(m)}{\sqrt{m}} \sum_{\substack{n \leq x_i / m \\ P(n) \leq y_{j-1}}} \frac{f(n)}{\sqrt{n}}, \]
with the parameters $(y_j)_{j=0}^J$ depending on $l$. One finds that the first term and the innermost sum of the second term behave roughly like $F_{y_j} (1/2)$, for $y_j$ the smoothness parameter, where $F_y (s) \coloneqq \prod_{p \leq y} ( 1 - f(p)/p^s)^{-1}$. Obtaining this relation is a critical step in our proof. The first sum can be seen to behave like the Euler product $F_{y_0} (1/2)$ by simply completing the range $n \leq x_i$ to all $n \in \N$. The inner sum of the second term is trickier, and we first have to condition on $f(p)$ for $y_{j-1} < p \leq y_j$ in the outer range so that we can focus entirely on understanding these inner sums over smooth numbers. Having conditioned, it is possible for us to replace our outer sums with integrals, allowing application of the following key result, which has seen abundant use in the study of random multiplicative functions (see for example \citet{gerspach2022low}, \citet{HarperHighMom}, \cite{harper2020moments}, or \citet{Mastrostefano}).
\begin{ha}[(5.26) of \citet{montgomery2007multiplicative}] \label{ha-result}
Let $(a_n)_{n=1}^\infty$ be a sequence of complex numbers, and let $A(s) = \sum_{n=1}^\infty \frac{a_n}{n^s}$ denote the corresponding Dirichlet series, and $\sigma_c$ the abscissa of convergence. Then for any $\sigma > \max\{ 0, \sigma_c \}$, we have \[ \int_0^\infty \frac{|\sum_{n \leq x} a_n|^2}{x^{1 + 2 \sigma}} \, dx \, = \frac{1}{2\pi} \int_{-\infty}^{\infty} \biggl| \frac{A(\sigma + i t)}{\sigma + it} \biggr|^2 \, dt \, . \]
\end{ha} 
It is then a case of extracting the Euler product from the integral. To do this, we employ techniques from \citet{gerspach2022low}, noting that some factors of the Euler product remain approximately constant over small ranges of integration. We then show that these Euler products don't exceed the anticipated size coming from the law of the iterated logarithm. To do this, we consider a sparser third set of points, $(\tilde{X}_{k})$, chosen so that the variance of $\sum_{p \leq \tilde{X}_k}\Re f(p)/\sqrt{p}$ grows geometrically in $k$. These intervals mimic those used in classical proofs of the law of the iterated logarithm (for example, in chapter 8 of \citet{Gut_2013}), and are necessary to obtain a sharp upper bound by an application of Borel--Cantelli. \\

\subsection{Outline of the proof of Theorem \ref{T:2}}
The proof of the lower bound is easier, instead relying on an application of the second Borel--Cantelli lemma. The aim is to show that, for some appropriately chosen points $(T_k)$, the function $|M_f (t)|$ takes a large value between $T_{k-1}$ and $T_k$ infinitely often with probability $1$. We begin by noting that
\[ \max_{t \in [T_{k-1}, T_k]} | M_f (t) |^2 \geq \frac{1}{\log T_k} \int_{T_{k-1}}^{T_k} \frac{\bigl|M_f (t)\bigr|^2}{t^{1 + \sigma}} \, dt \, ,\]
for some small convenient $\sigma > 0$. Over this interval we have $M_f (t) = \sum_{n \leq t \, : \, P(n) \leq T_k} f(n)/\sqrt{n}$, and so we may work with this instead. We now just need to complete the integral to the range $[1,\infty)$ so that we can apply Harmonic Analysis Result \ref{ha-result}, and again obtain the Euler product. This can be done by utilising the upper bound from Theorem \ref{T:1} to complete the lower range of the integral, and an application of Markov's inequality shows that the contribution from the upper range is almost surely small when $\sigma$ is chosen appropriately. After some standard manipulations to remove the integral on the Euler product side, one can find that, roughly speaking,
\[ \max_{t \in [T_{k-1}, T_k]} |M_f (t)|^2 \geq \exp\biggl( 2 \sum_{p \leq T_k} \frac{\Re f(p)}{\sqrt{p}} \biggr) + O(E(k)), \]
occurs infinitely often almost surely, for some relatively small error term $E(k)$.
The proof is then completed using the Berry-Esseen Theorem and the second Borel--Cantelli lemma, following closely a standard proof of the law of the iterated logarithm (this time we follow Varadhan, \cite{varadhan2001probability}, section 3.9).
\section{Upper bound} \label{s:mainarg}
\subsection{Bounding variation between test points} \label{s:testpointvariation}
We first introduce a useful lemma that will be used for expectation calculations throughout the paper.
\label{s:max}
\begin{lemma} \label{L:Expectation}
Let $\{a(n)\}_{n \in \N}$ be a sequence of complex numbers, with only finitely many $a(n)$ nonzero. For any $l \in \N$, we have
\[ \E \biggl| \sum_{n \geq 1} \frac{a(n) f(n)}{\sqrt{n}} \bigg|^{2l} \leq \biggl( \sum_{n \geq 1} \frac{|a(n)|^2 \tau_l (n)}{n} \biggr)^l, \]
where $\tau_l$ denotes the $l$-divisor function, $\tau_l (n) = \# \{(a_1,...,a_l): a_1 a_2 ... a_l = n  \}$, and we write $\tau(n)$ for $ \tau_2 (n)$.
\end{lemma}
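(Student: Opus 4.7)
The plan is to expand the $2l$-th moment, exploit the orthogonality of Steinhaus random variables to collapse it to a diagonal sum, and then apply Cauchy--Schwarz together with the submultiplicativity of the divisor function $\tau_l$.

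First I would write $|A|^{2l} = A^l \overline{A}^l$ where $A = \sum_n a(n) f(n)/\sqrt{n}$, and expand to obtain
\[ \E |A|^{2l} = \sum_{n_1, \ldots, n_l} \sum_{m_1, \ldots, m_l} \frac{a(n_1) \cdots a(n_l) \overline{a(m_1)} \cdots \overline{a(m_l)}}{\sqrt{n_1 \cdots n_l \, m_1 \cdots m_l}} \, \E \bigl[ f(n_1 \cdots n_l) \overline{f(m_1 \cdots m_l)} \bigr]. \]
Here I used complete multiplicativity. Since the $f(p)$ are independent Steinhaus random variables, $\E[f(N)\overline{f(M)}] = \mathds{1}[N = M]$; only the diagonal $n_1 \cdots n_l = m_1 \cdots m_l = N$ survives, and the denominator collapses to $N$. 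Grouping by $N$ gives
\[ \E |A|^{2l} = \sum_{N \geq 1} \frac{1}{N} \biggl| \sum_{n_1 \cdots n_l = N} a(n_1) \cdots a(n_l) \biggr|^2. \]

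Next I would apply Cauchy--Schwarz to the inner sum over the $\tau_l(N)$ ordered factorisations of $N$:
\[ \biggl| \sum_{n_1 \cdots n_l = N} a(n_1) \cdots a(n_l) \biggr|^2 \leq \tau_l(N) \sum_{n_1 \cdots n_l = N} |a(n_1)|^2 \cdots |a(n_l)|^2. \]
Substituting back and unfolding the constraint $n_1 \cdots n_l = N$ yields
\[ \E |A|^{2l} \leq \sum_{n_1, \ldots, n_l} \frac{\tau_l(n_1 \cdots n_l)}{n_1 \cdots n_l} |a(n_1)|^2 \cdots |a(n_l)|^2. \]

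The final step is to use that $\tau_l$ is submultiplicative, i.e.\ $\tau_l(n_1 \cdots n_l) \leq \tau_l(n_1) \cdots \tau_l(n_l)$, which is immediate from the definition by splitting each ordered factorisation of the product according to the contribution from each $n_i$. The sum then factors as
\[ \E |A|^{2l} \leq \prod_{i=1}^{l} \sum_{n_i \geq 1} \frac{|a(n_i)|^2 \tau_l(n_i)}{n_i} = \biggl( \sum_{n \geq 1} \frac{|a(n)|^2 \tau_l(n)}{n} \biggr)^l, \]
as required. None of the steps are especially delicate; if anything is subtle it is just remembering to quote Steinhaus orthogonality correctly and to invoke the submultiplicativity of $\tau_l$ at the end rather than attempting a direct combinatorial identity.
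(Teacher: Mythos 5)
Your proof is correct and follows exactly the route the paper indicates for its citation of Lemma 9 of \cite{AHZ}: expand $|A|^{2l}=A^l\overline{A}^l$, use Steinhaus orthogonality to reduce to the diagonal, apply Cauchy--Schwarz over the $\tau_l(N)$ ordered factorisations, and finish with submultiplicativity of $\tau_l$ (which, if one wants to be fully careful, is checked on prime powers via $\binom{a+b+l-1}{l-1}\leq\binom{a+l-1}{l-1}\binom{b+l-1}{l-1}$). No gaps.
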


\begin{proof}
This is Lemma 9 of \cite{AHZ}. It is proved by conjugating, taking the expectation, and applying Cauchy--Schwarz.
\end{proof}

\begin{lemma} \label{L:Max}
There exists a small constant $c \in (0,1)$, such that, with \[ x_i = \lfloor e^{i^c} \rfloor, \]
we have the bound
\[
\max_{x_{i-1} < x \leq x_i} | M_f (x) - M_f (x_{i-1}) | \ll 1 \; \text{a.s.}
\]
\end{lemma}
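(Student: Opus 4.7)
The plan is to bound a large even moment of the supremum and then apply Markov's inequality together with the first Borel--Cantelli lemma. Writing $D_i(x) \coloneqq M_f(x) - M_f(x_{i-1}) = \sum_{x_{i-1} < n \leq x} f(n)/\sqrt{n}$, the goal reduces to showing that $\E \max_{x \in (x_{i-1}, x_i]} |D_i(x)|^{2l}$ is summable in $i$ for some fixed integer $l$; this would imply that for any $K > 0$ the events $\{\max_x |D_i(x)| > K\}$ occur for only finitely many $i$ almost surely, giving the desired bound.

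To handle the supremum I would use a Rademacher--Menshov style dyadic chaining. Setting $J \coloneqq \lceil \log_2(x_i - x_{i-1}) \rceil$ and writing any integer $x - x_{i-1}$ in binary as $\sum_{j=0}^J \epsilon_j 2^j$, the sum $D_i(x)$ decomposes as a sum of at most $J+1$ block sums $T_{j,k}$, where $T_{j,k}$ runs over one of at most $\lceil (x_i - x_{i-1})/2^j \rceil$ disjoint dyadic blocks $B_{j,k} \subset (x_{i-1}, x_i]$ of length $2^j$. Hölder's inequality and a union bound then yield
\[
\E \max_x |D_i(x)|^{2l} \ll (\log x_i)^{2l-1} \sum_{j=0}^{J} \sum_k \E |T_{j,k}|^{2l} .
\]
Applying Lemma \ref{L:Expectation} to each block gives $\E |T_{j,k}|^{2l} \leq V_{j,k}^l$ with $V_{j,k} \coloneqq \sum_{n \in B_{j,k}} \tau_l(n)/n$, and disjointness of the blocks at each fixed level $j$ lets one bound $\sum_k V_{j,k}^l \leq V_i^{l-1} \sum_k V_{j,k} = V_i^l$, where $V_i \coloneqq \sum_{x_{i-1} < n \leq x_i} \tau_l(n)/n$. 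This produces the clean estimate $\E \max_x |D_i(x)|^{2l} \ll (\log x_i)^{2l} V_i^l$.

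Standard mean value estimates give $V_i \asymp (\log x_i)^{l-1} (\log x_i - \log x_{i-1}) \ll i^{cl - 1}$, and $\log x_i \ll i^c$, so the whole bound becomes $\ll i^{cl(l+2) - l}$. Taking $l = 3$ for concreteness, the exponent is $15c - 3$, which is strictly less than $-1$ whenever $c < 2/15$; for any such $c$ the series $\sum_i \E \max_x |D_i(x)|^{2l}$ converges, and a Markov plus Borel--Cantelli argument finishes the proof.

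The step I expect to require the most care is the chaining: the naive inequality $\max_x |D_i(x)|^{2l} \leq \sum_m |D_i(m)|^{2l}$ would cost a multiplicative factor of $x_i - x_{i-1}$, which is much too large to save by increasing $l$. The dyadic chaining, combined with the disjointness of blocks at each level that controls $\sum_k V_{j,k}^l$ by $V_i^l$ rather than the trivial $(x_i-x_{i-1})V_i^l/2^j$, is exactly what restricts this loss to a polylogarithmic factor, and everything else is a routine bookkeeping exercise.
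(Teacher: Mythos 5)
Your argument is correct, and it reaches the lemma by a genuinely more self-contained route than the paper. The paper works with the \emph{unweighted} increments first: it verifies the fourth-moment hypothesis $\E|\sum_{u<n\leq v}f(n)|^4\ll((v-u)\log u)^2$ via Lemma \ref{L:Expectation} and the divisor-sum asymptotic, imports the maximal inequality of Lau--Tenenbaum--Wu (their Lemma 2.3, itself a chaining argument of the same Rademacher--Menshov flavour you use) to get $\max_{x_{i-1}<x\leq x_i}|\sum_{x_{i-1}<n\leq x}f(n)|\ll\sqrt{x_i}/\log x_i$, and then passes to the weighted sum by Abel summation using $x_{i-1}\asymp x_i$. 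You instead chain directly on the weighted sums, which removes both the appeal to an external lemma and the partial-summation step, at the cost of writing out the dyadic decomposition yourself; your key savings --- Hölder over the $O(\log x_i)$ levels and the disjointness bound $\sum_k V_{j,k}^l\leq V_i^{l-1}\sum_k V_{j,k}$ --- are exactly what make the union bound affordable, and your exponent bookkeeping is right (note that fourth moments, $l=2$, already give exponent $8c-2<-1$ for $c<1/8$, so $l=3$ is not needed). One small caution: when you assert $V_i\ll i^{cl-1}$ you should derive it from $V_i\leq x_{i-1}^{-1}\sum_{x_{i-1}<n\leq x_i}\tau_l(n)$ together with the asymptotic $\sum_{n\leq x}\tau_l(n)=xP_{l-1}(\log x)+O(x^{1-1/l+\varepsilon})$, whose \emph{power-saving} error term is negligible against $(x_i-x_{i-1})(\log x_i)^{l-1}\asymp x_i\,i^{cl-1}$; the purely logarithmic asymptotic $\sum_{n\leq x}\tau_l(n)/n=(\log x)^l/l!+O((\log x)^{l-1})$ would not suffice, since there the error term dominates the short-interval main term.
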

\begin{proof}
This result closely resembles Lemma 2.3 of \citet{LTW}, who proved a similar result for (unweighted) Rademacher $f$. We note that their lemma purely relies on the fourth moment of partial sums of $f(n)$ being small. For $f$ Steinhaus, an application of Lemma \ref{L:Expectation} implies that for $ u \leq v$,
\begin{align*}
\E \bigl| \sum_{u < n \leq v} f(n) \bigr|^4 \leq \Bigl(\sum_{u < n \leq v} \tau(n) \Bigr)^2.
\end{align*}
Now, if additionally $u \asymp v$, then by Theorem 12.4 of Titchmarsh \cite{titchmarsh1986theory}, we have
\begin{align*} 
\sum_{u < n \leq v} \tau(n) & = v \log v - u \log u + (2 \gamma - 1)(v-u) + O(v^{1/3}) \\ 
&= (v-u) \log u + v \log (v/u) + (2 \gamma - 1)(v-u) + O(v^{1/3}) \\
&\ll (v-u) \log u + O(v^{1/3}) .
\end{align*}
So certainly
\[ \E \bigl| \sum_{u < n \leq v} f(n) \bigr|^4 \ll v^{2/3} (v-u)^{4/3} (\log v)^{52/3}, \]
which is the fourth moment bound in the work of \citet{LTW} (equation (2.5)). Note that it suffices to consider $u \asymp v$, since for $c \in (0,1)$, we have $x_{i-1} \asymp x_i$. The rest of their proof then goes through for Steinhaus $f$, so that for some $c \in (0,1)$, we have
\begin{align*}
\max_{x_{i-1} < x \leq x_i} \bigl| \sum_{x_{i-1} < n \leq x} f(n) \bigr| \ll \frac{\sqrt{x_i}}{\log x_i}. 
\end{align*}
It then follows from Abel summation that
\[ \max_{x_{i-1} < x \leq x_i} \bigl| \sum_{x_{i-1} < n \leq x} \frac{f(n)}{\sqrt{n}} \bigr| \ll \sqrt{\frac{x_i}{x_{i-1}}} \frac{1}{\log x_i} \ll 1,  \]
as required. We fix the value of $c \in (0,1)$ for the remainder of this section, and remark that this bound is stronger than we need.
\end{proof}
\subsection{Bounding on test points} \label{ss:boundingontestpoints}
To complete the proof of Theorem \ref{T:1}, it suffices to prove the following proposition \begin{proposition} \label{P:Mxibound} For any $\varepsilon>0$, we have
\[ M_f (x_i) \ll \exp{\bigl((1+\varepsilon)\,\sqrt{\log_2 x_i \log_4 x_i}\bigr)}, \hspace{0.3cm} \forall \, i , \] almost surely.
\end{proposition}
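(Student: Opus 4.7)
The plan is to implement the strategy outlined in Section 1.1, reducing $M_f(x_i)$ to an Euler product and then bounding that Euler product via the law of the iterated logarithm. First, I would introduce a coarser sequence of test points $X_l = \lfloor \exp(\exp(l/A)) \rfloor$ (for some large constant $A$) so that every $x_i$ lies in a unique dyadic-on-log-log block $[X_{l-1}, X_l)$. For each $l$ I would fix a geometric sequence of smoothness parameters $y_0 < y_1 < \dots < y_J$ with $y_0$ a suitable power of $\log X_l$ and $y_J \geq X_l$, and decompose each $n \leq x_i$ according to the interval $(y_{j-1}, y_j]$ containing the largest prime factor $P(n)$, yielding exactly the identity displayed in the outline. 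It suffices to bound each of the $J+1 \ll \log \log x_i$ resulting terms by $\exp((1 + \varepsilon/2)\sqrt{\log_2 x_i \log_4 x_i})$ uniformly in $i \in [X_{l-1}, X_l)$, almost surely for all but finitely many $l$.

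The inner sum $\sum_{n \leq x_i / m, P(n) \leq y_{j-1}} f(n)/\sqrt{n}$ should be compared to $F_{y_{j-1}}(1/2) = \prod_{p \leq y_{j-1}}(1 - f(p)/\sqrt{p})^{-1}$. For the $j=0$ term, completing the range $n \leq x_i$ to all $y_0$-smooth numbers introduces a tail error controlled by Lemma \ref{L:Expectation}. For $j \geq 1$, I would condition on $(f(p))_{p \leq y_j}$ with $p > y_{j-1}$ so that the outer $m$-sum is deterministic, write an $L^2$-type expression $\sum_m |\ldots|^2 / m^{1+2\sigma}$ using Cauchy--Schwarz (with a small $\sigma > 0$ chosen depending on $l$), convert the $m$-sum to an integral via Abel summation, and apply Harmonic Analysis Result \ref{ha-result} to the Dirichlet series whose partial sums are the inner smooth sums. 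This produces an integral of $|F_{y_{j-1}}(1/2 + \sigma + it)/(\sigma + it)|^2$ against a slowly varying factor. Localising the $t$-integral on short scales as in \cite{gerspach2022low}, the primes $p \leq (\text{scale})^{-1}$ contribute their value at $t = 0$ (yielding $F_{y_{j-1}}(1/2)$ up to a constant), while the high primes can be controlled in expectation. After taking expectations over the remaining randomness, a high-moment argument via Lemma \ref{L:Expectation} plus Markov's inequality and a union bound over $i \in [X_{l-1}, X_l)$ reduce everything to controlling $\max_{y \leq X_l} |F_y(1/2)|$.

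Finally, since $\log |F_y(1/2)| = \sum_{p \leq y} \Re f(p)/\sqrt{p} + O(1)$ with high probability, I would introduce the third, sparser sequence $\tilde X_k$ with $\sum_{p \leq \tilde X_k} 1/p$ growing geometrically in $k$, so that $\var(\sum_{p \leq \tilde X_k} \Re f(p)/\sqrt{p}) \asymp \log_2 \tilde X_k$ also grows geometrically. On the block $[\tilde X_{k-1}, \tilde X_k)$, a sub-Gaussian tail bound for the bounded-increment sum $\sum_{p \leq y} \Re f(p)/\sqrt{p}$ combined with a Lévy-type maximal inequality gives that the probability of exceeding $(1+\varepsilon)\sqrt{\log_2 \tilde X_k \log_4 \tilde X_k}$ anywhere in the block is summable in $k$; the first Borel--Cantelli lemma then delivers the desired almost sure bound, and shuffling constants propagates it to all $x_i$. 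The principal obstacle is the harmonic-analysis extraction step: one must choose the parameters $\sigma$, $J$, the $y_j$, and the test-point spacing $X_l$ in a single coherent hierarchy so that the cumulative error from the short-interval approximation of the Euler product, the $\sigma$-regularisation, and the union bound over $i$ inside each block is all swallowed by the $\varepsilon$ slack, without sacrificing the sharp constant $1$ in front of $\sqrt{\log_2 x \log_4 x}$.
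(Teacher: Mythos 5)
Your overall architecture matches the paper's: the three nested scales $x_i \in [X_{l-1},X_l) \subset [\tilde X_{k-1},\tilde X_k)$, the smoothness decomposition with parameters $y_0 < \dots < y_J$, the passage to $\int |F_{y_{j-1}}(1/2+\sigma+it)/(\sigma+it)|^2\,dt$ via Harmonic Analysis Result \ref{ha-result}, the Gerspach-style localisation extracting $|F_{e^{1/T}}(1/2)|^2$ on each dyadic scale, and the law-of-the-iterated-logarithm endgame via a L\'evy maximal inequality, an exponential upper bound, and the first Borel--Cantelli lemma. However, two steps would fail as you have literally described them.

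First, the conditioning is backwards. You propose to condition on $f(p)$ for $p \in (y_{j-1},y_j]$ ``so that the outer $m$-sum is deterministic'' and then reach $\sum_m |\cdot|^2/m^{1+2\sigma}$ by Cauchy--Schwarz. With the outer coefficients $f(m)$ frozen, Cauchy--Schwarz applied to $\sum_m f(m)c_m/\sqrt m$ cannot produce the normalisation $\sum_m |c_m|^2/m$ times a bounded factor, because $\sum_m 1$ over $\{m>y_{j-1}: p\mid m\Rightarrow p\in(y_{j-1},y_j]\}$ is unbounded and any split $m^{-1/2}=m^{-\delta/2}\cdot m^{-(1-\delta)/2}$ either blows up the weight on $|c_m|^2$ or the companion sum. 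The $1/m$ normalisation comes from orthogonality of the $f(m)$: one must condition on $\mathcal F_{y_{j-1}}=\sigma(f(p):p\le y_{j-1})$, so that the inner smooth sums $c_m$ are measurable, and then \emph{average over} the middle primes using the moment bound of Lemma \ref{L:Expectation}, yielding $\bigl(\sum_m \tau_q(m)|c_m|^2/m\bigr)^q$. (A related parameter point: $y_0$ must be taken as a small power of $X_l$, namely $\exp(ce^l/6l)$, not a power of $\log X_l$; otherwise $J$ grows like a power of $l$ rather than like $\log l$, and the smoothing length $V\approx(\log X_l)^{2q^2}$ violates the hypotheses of the sieve estimate used to sum $\tau_q(m)$ on short intervals.)

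Second, in the final LIL step you place the threshold at the top of the block, bounding $\sup_{y\le \tilde X_k}\sum_{p\le y}\Re f(p)/\sqrt p$ by $(1+\varepsilon)\sqrt{\log_2 \tilde X_k\log_4 \tilde X_k}$. That probability is summable for \emph{any} geometric ratio, but the resulting bound does not transfer to $x_i$ near the bottom of the block without inflating the constant by $\sqrt{\rho}$, where $\tilde X_k=\exp(\exp(\rho^k))$. One must instead compare the sum up to (roughly) $\tilde X_k$ against the threshold $(1+\varepsilon/4)\sqrt{\log_2 \tilde X_{k-1}\log_4 \tilde X_{k-1}}$ measured at the \emph{bottom} of the block, take $\rho$ close to $1$ in terms of $\varepsilon$, and check that the exponential bound still yields $\ll k^{-(1+\varepsilon/4)^2(1+o(1))/\rho^2}$, which is summable only because $(1+\varepsilon/4)^2>\rho^2$. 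This near-miss computation is where the sharp constant is actually won; it is not absorbed by ``shuffling constants.''
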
 
\begin{proof}[Proof of Theorem \ref{T:1}, assuming Proposition \ref{P:Mxibound}]
By the triangle inequality, we have
\[|M_f(x)| \leq |M_f(x_{i-1})| + \max_{x_{i-1} < x \leq x_i} | M_f (x) - M_f (x_{i-1}) |.\]
Theorem \ref{T:1} then follows from Proposition \ref{P:Mxibound} (which bounds the first term) and Lemma \ref{L:Max} (which bounds the second term).
\end{proof}
The rest of this section is devoted to proving Proposition \ref{P:Mxibound}. We begin by fixing  $\varepsilon>0$. Throughout we will assume this is sufficiently small, and implied constants (from $\ll$ or ``Big Oh'' notation) will depend only on $\varepsilon$, unless stated otherwise. Beginning similarly to \citet{LTW}, we define the points $X_l = e^{e^l}$, and for some $\alpha \in (0,1/2)$ chosen at the end of subsection \ref{s:Dijterm}, we define 
\begin{align}
y_0 = \exp\biggl(\frac{ce^l}{6l}\biggr), \;\; y_j = y_{j-1}^{e^{\alpha}}, \text{ for $1 \leq j \leq J$ },
\label{equ:2.01} \tag{2.01}
\end{align}
where $J$ is minimal so that $y_J \geq X_l$. One can calculate that
\begin{equation}
J \ll \frac{\log l}{\alpha}.
\label{equ:2.02} \tag{2.02}
\end{equation}
The points $X_l$ partition the positive numbers so that each $x_i$ lies inside some interval $[X_{l-1}, X_l)$. As mentioned, we also consider $X_{l-1}$ as being inside some very large intervals $[\tilde{X}_{k-1}, \tilde{X}_k)$, where $\tilde{X}_k = \exp(\exp(\rho^k))$ for some $\rho>1$ depending only on $\varepsilon$, specified at the end of subsection \ref{s:loiltyperesult}. Throughout we will assume that $k$, and subsequently $i$ and $l$, are sufficiently large. To prove Proposition \ref{P:Mxibound}, it suffices to show that the probability of
\begin{equation*} \mathcal{A}_k = \Biggl\{ \sup_{\tilde{X}_{k-1} \leq X_{l-1} < \tilde{X}_k} \sup_{X_{l-1} \leq x_i < X_l} \frac{|M_f (x_i)|}{\exp\bigl((1 + \varepsilon){\sqrt{\log_2 x_i \log_4 x_i}}\bigl)} > 4 \Biggr\}, \end{equation*}
is summable in $k$, since this will allow for application of the first Borel--Cantelli lemma. As mentioned, we first split the sum according to the prime factorisation of each $n$,
\[ M_f (x_i) = S_{i,0} + \sum_{1 \leq j \leq J} S_{i,j}, \]
where
\begin{align*}
S_{i,0} &= \sum_{\substack{n \leq x_i \\ P(n) \leq y_0}} \frac{f(n)}{\sqrt{n}}, \label{equ:2.03} \tag{2.03} \\
S_{i,j} &= \sum_{\substack{y_{j-1} < m \leq x_i \\ p | m \Rightarrow p \in (y_{j-1}, y_j]}} \frac{f(m)}{\sqrt{m}} \sum_{\substack{n \leq x_i / m \\ P(n) \leq y_{j-1}}} \frac{f(n)}{\sqrt{n}}. \label{equ:2.04} \tag{2.04}
\end{align*}
It is fairly straightforward to write $S_{i,0}$ in terms of an Euler product by completing the sum over $n$. The $S_{i,j}$ terms are a bit more complicated, and we will have to do some conditioning to obtain the Euler products which we expect dictate the inner sums. Similar ideas play a key role in the work of \citet{gerspach2022low}. With this in mind, we have
\begin{equation} \p (\mathcal{A}_k) \leq \p(\mathcal{B}_{0,k}) + \p(\mathcal{B}_{1,k}), \label{equ:2.05} \tag{2.05} \end{equation}
where
\begin{align*}
\mathcal{B}_{0,k} &= \Biggl\{\sup_{\tilde{X}_{k-1} \leq X_{l-1} < \tilde{X}_k} \sup_{X_{l-1} \leq x_i < X_l} \frac{|S_{i,0}|}{\exp\bigl((1 + \varepsilon){\sqrt{\log_2 x_i \log_4 x_i}}\bigr)} > 2 \Biggr\}, \label{equ:2.06} \tag{2.06}\\
\mathcal{B}_{1,k} &= \Biggl\{\sup_{\tilde{X}_{k-1} \leq X_{l-1} < \tilde{X}_k} \sup_{X_{l-1} \leq x_i < X_l} \frac{ \sum_{1 \leq j \leq J} |S_{i,j}|}{\exp\bigl((1 + \varepsilon){\sqrt{\log_2 x_i \log_4 x_i}}\bigr) } > 2 \Biggr\} .
\end{align*}
It suffices to prove that both $ \p(\mathcal{B}_{0,k})$ and $\p(\mathcal{B}_{1,k})$ are summable. 

\subsection{Conditioning on likely events} 
To proceed, we will utilise the following events, recalling that $F_y (s) = \prod_{p \leq y} ( 1 - f(p)/p^s)^{-1}$.
\begin{align*}
G_{j,l} &= \Biggl\{\frac{ \sup_{p \leq y_{j-1}} |F_p (1/2)|}{\exp\bigl((1 + \varepsilon)\sqrt{ \log_2 {X}_{l-1} \log_4 {X}_{l-1}}\bigr)} \leq \frac{1}{l^5}  \Biggr\}, \label{equ:2.07} \tag{2.07} \\
I_{j,l}^{(1)} &= \Biggl\{ \int_{-1/\log y_{j-1}}^{1/\log y_{j-1}}  \Bigg|\frac{F_{y_{j-1}}(1/2 + 1/\log X_l + it)}{F_{y_{j-1}} (1/2)} \Bigg|^2 \, dt \leq \frac{l^4}{\log y_{j-1}}\Biggr\} , \\
I_{j,l}^{(2)} &= \Biggl\{ \sum_{\substack{1/\log y_{j-1} \leq |T| \leq 1/2 \\ T \text{ dyadic}}} \frac{1}{T^2} \int_{T}^{2T} \Bigg| \frac{F_{y_{j-1}}(1/2 + 1/\log X_l + it)}{F_{e^{1/T}} (1/2)} \Bigg|^2  \, dt \, \leq l^4 \log y_{j-1} \Biggr\} , \\
I_{j,l}^{(3)} &= \Biggl\{ \int_{1/2}^{\infty} \frac{|F_{y_{j-1}}(1/2 + 1/\log X_l + it)|^2 + |F_{y_{j-1}}(1/2 + 1/\log X_l - it)|^2}{t^2} \, dt \, \leq l^4 \log y_{j-1} \Biggr\}. 
\end{align*} 
\begin{remark} \label{r:integralsignremark}
\normalfont The summand in the events $I_{j,l}^{(2)}$ should be adjusted for negative $T$, in which case one should flip the range of integration, and instead take $F_{e^{1/|T|}}(1/2)$ in the denominator of the integrand. For the sake of tidiness, we have left out these conditions.
\end{remark}
These events will be very useful to condition on when it comes to estimating the probabilities in \eqref{equ:2.06}. Ideally, \emph{all} of these events will occur eventually, and we will show that this is the case with probability one. Therefore, we define the following intersections of these events, giving ``nice behaviour'' for $S_{i,j}$ for all $i,j$ where $x_i$ runs over the range $[X_{l-1},X_l)$ for $X_{l-1} \in [\tilde{X}_{k-1},\tilde{X}_k)$. We stress that $J$ (defined in \eqref{equ:2.01}) depends on $l$ .
\begin{align*}
G_k = \bigcap_{l \,:\, \tilde{X}_{k-1} \leq X_{l-1} < \tilde{X}_k} \bigcap_{j=1}^J G_{j,l} \, ,  &&  I_{j,l} = \bigcap_{r=1}^3 I_{j,l}^{(r)}  \, ,  &&  I_{k} = \bigcap_{l \, : \, \tilde{X}_{k-1} \leq X_{l-1} < \tilde{X}_k}\bigcap_{j=1}^J I_{j,l} \, . \label{equ:2.08} \tag{2.08}
\end{align*}
\begin{proposition} \label{p:sumthenprop}
Proposition \ref{P:Mxibound} follows if $\p(G_k^c)$ and $ \p(I_{k}^c)$ are summable.
\end{proposition}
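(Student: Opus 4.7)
The plan is a standard union-bound-plus-Borel--Cantelli reduction. Starting from \eqref{equ:2.05}, the approach is to decompose each probability according to whether the good events hold:
\[ \p(\mathcal{B}_{0,k}) \leq \p(\mathcal{B}_{0,k} \cap G_k) + \p(G_k^c), \qquad \p(\mathcal{B}_{1,k}) \leq \p(\mathcal{B}_{1,k} \cap G_k \cap I_k) + \p(G_k^c) + \p(I_k^c). \]
The hypothesis provides summability of $\p(G_k^c)$ and $\p(I_k^c)$, so the remaining task is to kill the two intersections. My strategy is to show \emph{pathwise} that $\mathcal{B}_{0,k} \cap G_k = \emptyset$ and $\mathcal{B}_{1,k} \cap G_k \cap I_k = \emptyset$ for all $k$ large enough; once these deterministic inclusions are in hand, the first Borel--Cantelli lemma gives $\p(\mathcal{A}_k \text{ i.o.}) = 0$ for each fixed $\varepsilon > 0$, and intersecting the resulting null sets over a sequence $\varepsilon_n \downarrow 0$ upgrades the bound to the form asserted by Proposition \ref{P:Mxibound}.

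For the $S_{i,0}$ inclusion I would argue as follows. Because $y_0 = \exp(ce^l/(6l))$ is much smaller than $x_i \geq X_{l-1}$, completing the sum over $y_0$-smooth $n \leq x_i$ to all $y_0$-smooth $n$ via a Rankin-style tail bound should produce $|S_{i,0}| = |F_{y_0}(1/2)|(1 + o(1))$ deterministically. The event $G_k$, via $G_{j,l}$ at $p = y_0$, then bounds $|F_{y_0}(1/2)|$ by $\exp((1+\varepsilon)\sqrt{\log_2 X_{l-1} \log_4 X_{l-1}})/l^5$; since $\log_2 x_i = \log_2 X_{l-1} + O(1)$ on $[X_{l-1}, X_l)$, this sits comfortably below the threshold $2\exp((1+\varepsilon)\sqrt{\log_2 x_i \log_4 x_i})$ defining $\mathcal{B}_{0,k}$, so the intersection is empty.

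The main obstacle is the second inclusion, $\sum_{j} |S_{i,j}|$ on $G_k \cap I_k$, which is exactly what the three events $I_{j,l}^{(r)}$ were designed to handle. Following the outline given in this section, my plan is: condition on $(f(p))_{y_{j-1} < p \leq y_j}$ so that the inner $y_{j-1}$-smooth sum becomes a Dirichlet partial sum with randomness independent of the outer one; approximate the outer $m$-sum by an integral; and apply Harmonic Analysis Result \ref{ha-result} with $\sigma = 1/\log X_l$ to convert the second moment of the inner sum into an integral of $|F_{y_{j-1}}(1/2 + 1/\log X_l + it)/(\sigma + it)|^2$ along the critical line. Splitting this integral across $|t| \leq 1/\log y_{j-1}$, $1/\log y_{j-1} < |t| \leq 1/2$ (dyadic), and $|t| > 1/2$ matches precisely the three bounds furnished by $I_{j,l}^{(1)}, I_{j,l}^{(2)}, I_{j,l}^{(3)}$, while $G_k$ controls the Euler-product factors $F_{y_{j-1}}(1/2)$ and $F_{e^{1/|T|}}(1/2)$ that appear in the denominators. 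A final sum over the $J \ll \log l / \alpha$ smoothness scales from \eqref{equ:2.02}, with $\alpha$ chosen small enough, should drive the total below $2\exp((1+\varepsilon)\sqrt{\log_2 x_i \log_4 x_i})$ deterministically. The technical heart is therefore the clean harmonic-analytic passage from the conditioned $S_{i,j}$ to the Euler-product proxies supplied by $G_k$; with that in place, the two Borel--Cantelli applications close the argument.
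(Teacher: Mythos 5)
The overall reduction in your first display is the same as the paper's, but the central claim on which your plan rests --- that $\mathcal{B}_{0,k}\cap G_k$ and $\mathcal{B}_{1,k}\cap G_k\cap I_k$ are \emph{empty} for large $k$ --- is false, and this is a genuine gap rather than a technicality. For $S_{i,0}$, the tail $\sum_{n>x_i,\,P(n)\leq y_0}f(n)/\sqrt{n}$ is not deterministically $o(|F_{y_0}(1/2)|)$: the trivial bound $\sum_{n>x_i,\,P(n)\leq y_0}n^{-1/2}$ is astronomically large (of order $\exp(c\sqrt{y_0}/\log y_0)$), so no pathwise completion of the sum is available. The only control is in mean square, $\E|\text{tail}|^2=\sum_{n>x_i,\,P(n)\leq y_0}1/n\ll \log y_0\cdot x_i^{-1/\log y_0}$ by Rankin's trick, and one must then run Markov plus a union bound over $i$ and $l$ and check summability in $k$; this is exactly what the paper does, and $G_k$ plays no role in killing the tail.

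The same objection is fatal for $\mathcal{B}_{1,k}$. The events $G_{j,l}$ and $I_{j,l}$ are $\mathcal{F}_{y_{j-1}}$-measurable: they constrain only the Euler products built from $f(p)$, $p\leq y_{j-1}$. The quantity $S_{i,j}$ also depends on the outer random sum over $m$ with prime factors in $(y_{j-1},y_j]$, which fluctuates freely on the event $G_k\cap I_k$; conditionally on $\mathcal{F}_{y_{j-1}}$ it can be large with positive probability, so no deterministic inclusion holds. What the conditioning actually buys is control of the \emph{conditional moments}: one bounds $\E\bigl(|S_{i,j}|^{2q}\mathbf{1}_{G_{j,l}\cap I_{j,l}}\bigr)$ by first applying Lemma \ref{L:Expectation} to the outer sum given $\mathcal{F}_{y_{j-1}}$ (producing the $\tau_q(m)$ weights), then passing from the sum over $m$ to an integral (which creates an error term $\mathcal{D}_{i,j}$ your sketch omits), and only then invoking Harmonic Analysis Result \ref{ha-result} and the events $I_{j,l}^{(1)},I_{j,l}^{(2)},I_{j,l}^{(3)}$ and $G_{j,l}$. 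Moreover a fixed moment does not suffice: to beat the union bound over the roughly $e^{l/c}$ test points $x_i$ in each $[X_{l-1},X_l)$ and obtain summability in $k$, the paper must take $q=\lfloor\log\log\tilde{X}_k\rfloor\to\infty$, yielding a bound of the shape $(R'_\varepsilon k^2/\rho^k)^{\lfloor\rho^k\rfloor}$. Your harmonic-analysis splitting matches the paper's, but without replacing the claimed empty intersections by this quantitative high-moment estimate the argument does not close. (The final intersection over $\varepsilon_n\downarrow 0$ is also unnecessary, since the proposition is asserted for a fixed $\varepsilon$.)
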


We will later show that $\p(G_k^c)$ and $\p(I_{k}^c)$ are indeed summable in subsections \ref{s:loiltyperesult} and \ref{s:cond} respectively. We proceed with proving this proposition, which is quite difficult and constitutes a large part of the paper.

\begin{proof}[Proof of Proposition \ref{p:sumthenprop}]

First we will show that $\p(\mathcal{B}_{0,k})$ is summable. It follows from definition \eqref{equ:2.03} that
\[ S_{i,0} = F_{y_0} (1/2) - \sum_{\substack{n > x_i \\ P(n) \leq y_0}} \frac{f(n)}{\sqrt{n}}. \]
By the triangle inequality (recalling \eqref{equ:2.06}), we have
    \begin{align*} \p( \mathcal{B}_{0,k}) & \leq \p \Biggl( \sup_{\tilde{X}_{k-1} \leq X_{l-1} < \tilde{X}_k} \frac{\big|F_{y_{0}} \bigl(1/2 \bigr)\big|}{\exp\Bigl((1 + \varepsilon)\sqrt{\log_2 {X}_{l-1} \log_4 {X}_{l-1}}\Bigr)} >  1  \Biggr) \\  & + \p \Biggl( \sup_{\tilde{X}_{k-1} \leq X_{l-1} < \tilde{X}_k} \sup_{X_{l-1} \leq x_i < X_l} \frac{\Bigl| \sum_{\substack{n > x_i \\ P(n) \leq y_0}} \frac{f(n)}{\sqrt{n}} \Bigr|}{\exp\Bigl((1 + \varepsilon){\sqrt{\log_2 {X}_{l-1} \log_4 {X}_{l-1}}}\Bigr)} > 1 \Biggr). 
\end{align*}
We note that $\p (G_{k}^c)$ (where $G_k$ is as defined in \eqref{equ:2.08}) is larger than this first term.
Since we are assuming that $\p (G_{k}^c)$ is summable, we need only show that the second term is summable. By the union bound and Markov's inequality with second moments (using Lemma \ref{L:Expectation} to evaluate the expectation, which is applicable by the dominated convergence theorem), we have
\begin{align*}
&\p \Biggl( \sup_{\tilde{X}_{k-1} \leq X_{l-1} < \tilde{X}_k} \sup_{X_{l-1} \leq x_i < X_l} \frac{\Big| \sum_{\substack{n > x_i \\ P(n) \leq y_0}} \frac{f(n)}{\sqrt{n}} \Big|}{\exp\Bigl({(1 + \varepsilon)\sqrt{\log_2 {X}_{l-1} \log_4 {X}_{l-1}}}\Bigr)} > 1  \Biggr) \label{equ:2.09} \tag{2.09} \\
&\leq \sum_{\tilde{X}_{k-1} \leq X_{l-1} < \tilde{X}_k} \sum_{X_{l-1} \leq x_i < X_l} \frac{ \sum_{\substack{n > x_i \\ P(n) \leq y_0}} \frac{1}{n}}{\exp\Bigl(2(1 + \varepsilon){\sqrt{\log_2 \tilde{X}_{k-1} \log_4 \tilde{X}_{k-1}}}\Bigr)}. 
\end{align*}
Here we apply Rankin's trick to note that
\begin{align*} \sum_{\substack{n > x_i \\ P(n) \leq y_0}} \frac{1}{n} &\leq x_i^{-1/\log y_0} \prod_{p \leq y_0} \Bigl( 1 - \frac{1}{p^{1 - 1/\log y_0}} \Bigr)^{-1} \ll \frac{\log y_0}{x_i^{1/\log y_0}}.
\end{align*}
Recalling that $y_0 = \exp\bigl(c e^l/6 l\bigr)$, we can bound the probability \eqref{equ:2.09} by
\begin{align*}
\ll &\frac{1}{\exp\Bigl(2\sqrt{\log \log \tilde{X}_{k-1}}\Bigr)}\sum_{\tilde{X}_{k-1} \leq X_{l-1} < \tilde{X}_k} \sum_{X_{l-1} \leq x_i < X_l} \frac{\log y_0}{x_i^{1 / \log y_0}} \\
\ll& \frac{1}{\exp\bigl(2\rho^{(k-1)/2}\bigr)}\sum_{\tilde{X}_{k-1} \leq X_{l-1} < \tilde{X}_k} \frac{1}{l e^{l(6/ce - 1/c - 1)}}
\ll \frac{1}{\exp\bigl(2\rho^{(k-1)/2}\bigr)},
\end{align*}
which is summable (with $c$ as in subsection \ref{s:testpointvariation}). Hence if $\p(G_{k}^c)$ is summable, then $\p(\mathcal{B}_{0,k})$ is summable, as required. \\

We now proceed to show that $\p(\mathcal{B}_{1,k})$ is summable, which will conclude the proof of Proposition \ref{p:sumthenprop}. Here we introduce the events in \eqref{equ:2.07}, giving
\begin{align*} \p(\mathcal{B}_{1,k}) &\leq \p(\mathcal{B}_{1,k} \cap G_k \cap I_{k}) + \p(G_k^c) + \p(I_{k}^c).
\end{align*}
Therefore, assuming the summability of the trailing terms, it suffices to show that $\p(\mathcal{B}_{1,k} \cap G_k \cap I_{k})$ is summable. As in \citet{LTW} (equation (3.16)), by the union bound, then taking $2q$'th moments and using Hölder's inequality, we have 
\begin{align*}
\p(\mathcal{B}_{1,k} \cap G_k \cap I_{k}) \leq \sum_{\tilde{X}_{k-1} \leq X_{l-1} < \tilde{X}_k} \sum_{X_{l-1} \leq x_i < X_l} \sum_{1 \leq j \leq J} \frac{\E(|S_{i,j}|^{2q}\mathbf{1}_{G_{j,l} \cap I_{j,l}}) J^{2q-1}}{\exp\bigl(2q{(1 + \varepsilon)\sqrt{\log_2 x_i \log_4 x_i}}\bigr) }. \label{equ:2.10} \tag{2.10}
\end{align*}
We will choose $q \in \mathbb{N}$ depending on $k$ at the very end of this subsection. We let $\mathcal{F}_{y_{j-1}} = \sigma(\{f(p): \, p \leq y_{j-1}\})$ be the $\sigma$-algebra generated by $f(p)$ for all $p \leq y_{j-1}$, forming a filtration. Note that $G_{j,l}$ and $I_{j,l}$ are $\mathcal{F}_{y_{j-1}}$-measurable. We introduce a function $V$ of $x_i$ that slowly goes to infinity with $i$, specified at the end of subsection \ref{s:Dijterm}.
Recalling the definition of $S_{i,j}$ from \eqref{equ:2.04}, by our expectation result (Lemma \ref{L:Expectation}), we have
\begin{align*}
\E \big[ |S_{i,j}|^{2q} \mathbf{1}_{G_{j,l} \cap I_{j,l}} \big] & = \E \bigl[ \E (|S_{i,j}|^{2q} \mathbf{1}_{G_{j,l} \cap I_{j,l}} | \mathcal{F}_{y_{j-1}}) \bigr] \\ & \leq \E \Biggl[ \mathbf{1}_{G_{j,l} \cap I_{j,l}} \Biggl( \sum_{\substack{y_{j-1} < m \leq x_i \\ p|m \Rightarrow p \in (y_{j-1},y_j]}} \frac{\tau_q (m)}{m} \Biggl| \sum_{\substack{n \leq x_i/m \\ P(n) \leq y_{j-1}}} \frac{f(n)}{\sqrt{n}} \Biggr|^2 \Biggr)^q \Biggr] \\ 
& = \E \Biggl[ \mathbf{1}_{G_{j,l} \cap I_{j,l}} \Biggl( \sum_{\substack{y_{j-1} < m \leq x_i \\ p|m \Rightarrow p \in (y_{j-1},y_j]}} \frac{V \tau_q (m)}{m^2}\int_m^{m(1+1/V)} \Biggl| \sum_{\substack{n \leq x_i/m \\ P(n) \leq y_{j-1}}} \frac{f(n)}{\sqrt{n}} \Biggr|^2\, dt \Biggr)^q \Biggr] \\
& \leq 2^{3q} \Bigl( \mathbb{E} \bigl( \mathcal{C}_{i,j}^q \bigr) + \mathbb{E} \bigl( \mathcal{D}_{i,j}^q \bigr) \Bigr) , \label{equ:2.11} \tag{2.11}
\end{align*}
where
\begin{align*}
\mathcal{C}_{i,j}&= \mathbf{1}_{G_{j,l} \cap I_{j,l}} \sum_{\substack{y_{j-1} < m \leq x_i \\ p|m \Rightarrow p \in (y_{j-1},y_j]}} \frac{V \tau_q (m)}{m^2}\int_m^{m(1+1/V)} \Biggl| \sum_{\substack{n \leq x_i/t \\ P(n) \leq y_{j-1}}} \frac{f(n)}{\sqrt{n}} \Biggr|^2\, dt , \label{equ:2.12} \tag{2.12} \\
\mathcal{D}_{i,j}&=\sum_{\substack{y_{j-1} < m \leq x_i \\ p|m \Rightarrow p \in (y_{j-1},y_j]}} \frac{V \tau_q (m)}{m^2} \int_m^{m(1+1/V)}  \Biggl| \sum_{\substack{x_i /t < n \leq x_i/m \\ P(n) \leq y_{j-1}}} \frac{f(n)}{\sqrt{n}} \Biggr|^2 \, dt ,
\end{align*}
and we have used the fact that $|A+B|^r \leq 2^r (|A|^r + |B|^r)$.
\subsection{Bounding the main term $\mathcal{C}_{i,j}$} \label{s:Cijterm}
We will see that our choices of $G_{j,l}$ and $I_{j,l}$ completely determine an upper bound for $\mathcal{C}_{i,j}$. We first swap the order of summation and integration to obtain
\begin{equation}
\mathcal{C}_{i,j} = \mathbf{1}_{G_{j,l} \cap I_{j,l}} \int_{y_{j-1}}^{x_i} \Biggl| \sum_{\substack{n \leq x_i/t \\ P(n) \leq y_{j-1}}} \frac{f(n)}{\sqrt{n}} \Biggr|^2 \sum_{\substack{t / (1 + 1/V) \leq m \leq t \\ p|m \Rightarrow p \in (y_{j-1}, y_j]}} \frac{V \tau_q (m)}{m^2} dt \, .
\label{equ:2.13} \tag{2.13}
\end{equation}
To estimate the sum over the divisor function we employ the following result from Harper, \cite{harper2020moments} (section 2.1, referred to also as Number Theory Result 1 there).
\begin{nt} \label{nt} Let $0 < \delta < 1$, let $r \geq 1$ and suppose $\max\{ 3, 2r \} \leq y \leq z \leq y^{2}$ and that $1 < u \leq v(1-y^{-\delta})$. Let $\Omega(m)$ equal the number of prime factors of $m$ counting multiplicity. Then \[ \sum_{\substack{u \leq m \leq v \\ p | m \Rightarrow y \leq p \leq z}} r^{\Omega(m)} \ll_\delta \frac{(v-u)r}{\log y} \prod_{y \leq p \leq z} \biggl( 1 - \frac{r}{p} \biggr)^{-1} . \]
\end{nt}
We note that $\tau_q (m) \leq q^{\Omega(m)}$ by submultiplicativity of $\tau_q$. The above result is applicable assuming that $V$ is, say, smaller than $\sqrt{y_0}$, and $q$ is an integer with $2q \leq y_0$ (indeed, $q$ will be approximately $l \leq y_0 / 2$ and $V$ will be roughly $(\log X_l)^{2l^2} \leq \sqrt{y_0}$), in which case we have
\begin{align*}
\sum_{\substack{t / (1 + 1/V) \leq m \leq t \\ p|m \Rightarrow p \in (y_{j-1}, y_j]}}\frac{V \tau_q (m)}{m^2} \ll& \frac{V}{t^2} \sum_{\substack{t / (1 + 1/V) \leq m \leq t \\ p|m \Rightarrow p \in (y_{j-1}, y_j]}} \tau_q (m)  \ll \frac{V}{t^2} \sum_{\substack{t / (1 + 1/V) \leq m \leq t \\ p|m \Rightarrow p \in (y_{j-1}, y_j]}} q^{\Omega(m)} \label{equ:2.14} \tag{2.14} \\  \ll& \frac{q}{t \log y_{j-1}} \prod_{y_{j-1} < p \leq y_j} \biggl( 1 - \frac{q}{p} \biggr)^{-1}. 
\end{align*}
Since $q$ will be very small compared to $y_0$ (in particular, $q = o(\log y_0)$), we have
\[ \prod_{y_{j-1} < p \leq y_{j}} \biggl( 1 - \frac{q}{p} \biggr)^{-1} \ll \biggl( \frac{\log y_{j}}{\log y_{j-1}} \biggr)^{q}.\]
Using the above and \eqref{equ:2.13}, we have
\begin{equation*}
\mathcal{C}_{i,j} \ll \frac{q \mathbf{1}_{G_{j,l} \cap I_{j,l}}}{\log y_{j-1}} \biggl( \frac{\log y_{j}}{\log y_{j-1}} \biggr)^{q} \int_{y_{j-1}}^{x_i} \Biggl| \sum_{\substack{n \leq x_i/t \\ P(n) \leq y_{j-1}}} \frac{f(n)}{\sqrt{n}} \Biggr|^2 \frac{dt}{t}.
\end{equation*}
Proceeding similarly to Harper \cite{HarperHighMom}, we perform the change of variables $z = x_i / t$, giving
\[ \mathcal{C}_{i,j} \ll \frac{q \mathbf{1}_{G_{j,l} \cap I_{j,l}}}{\log y_{j-1}} \biggl( \frac{\log y_{j}}{\log y_{j-1}} \biggr)^{q} \int_{1}^{x_i/y_{j-1}} \Big| \sum_{\substack{n \leq z \\ P(n) \leq y_{j-1}}} \frac{f(n)}{\sqrt{n}} \Big|^2 \frac{dz}{z}. \]
To apply Harmonic Analysis Result \ref{ha-result}, we need the power of $z$ in the denominator of the integrand to be greater than $1$, and so we introduce a factor of $(1 / z)^{2/\log x_i}$. By the definitions of $y_{j-1}$ and $y_j$ from \eqref{equ:2.01}, we have
\begin{align*}
\mathcal{C}_{i,j} &\ll \frac{q e^{\alpha q} \mathbf{1}_{G_{j,l} \cap I_{j,l}}}{\log y_{j-1}} \int_{1}^{x_i/y_{j-1}} \Biggl| \sum_{\substack{n \leq z \\ P(n) \leq y_{j-1}}} \frac{f(n)}{\sqrt{n}} \Biggr|^2 \frac{dz}{z^{1 + 2/\log x_i}} \label{equ:2.15} \tag{2.15} \\
&\ll \frac{q e^{\alpha q} \mathbf{1}_{G_{j,l} \cap I_{j,l}}}{\log y_{j-1}} \int_{1}^{\infty} \Biggl| \sum_{\substack{n \leq z \\ P(n) \leq y_{j-1}}} \frac{f(n)}{\sqrt{n}} \Biggr|^2 \frac{dz}{z^{1 + 2/\log X_{l}}} \, ,
\end{align*}
where we have completed the range of the integral to $[1, \infty)$, and used the fact that $x_i < X_l$, allowing us to remove dependence on $x_i$ without much loss, since $\log x_i$ varies by a constant factor for $x_i \in [X_{l-1},X_l)$. This is a key point: we have related $M_f (x_i)$ to an Euler product which depends only on the large interval $[X_{l-1}, X_l)$ in which $x_i$ lies. We now apply Harmonic Analysis Result \ref{ha-result}, giving
\begin{equation}
\mathcal{C}_{i,j} \ll \frac{q e^{\alpha q} \mathbf{1}_{G_{j,l} \cap I_{j,l}}}{\log y_{j-1}} \int_{-\infty}^{\infty} \Biggl| \frac{F_{y_{j-1}} ( 1/2 + 1/\log X_l + it )}{1 / \log X_l + it} \Biggr|^2 \,dt \, .
\label{equ:2.16} \tag{2.16}
\end{equation}
This integral is not completely straightforward to handle, as the variable of integration is tied up with the random Euler-product $F_{y_{j-1}}$. To proceed, we follow the ideas of \citet{gerspach2022low} in performing a dyadic decomposition of the integral, and introducing constant factors (with respect to $t$, but random) that allow us to extract the approximate size of the integral over certain ranges. The size of these terms is then handled using the conditioning on $I_{j,l}$ (recalling the definitions from \eqref{equ:2.07} and \eqref{equ:2.08}). \\

First of all, note that over the interval $[T,2T]$, the factor $p^{it} = e^{it \log p}$ varies a bounded amount for any $p \leq e^{1/T}$. Therefore, the Euler factors $(1 - f(p)/p^{1/2 + 1/ \log X_l + it})^{-1}$ are approximately constant on $[T,2T]$ for $p \leq e^{1/T}$. Subsequently, when appropriate, we will approximate the numerator by $|F_{e^{1/T}} (1/2)|^2$. We write 
\begin{align*}
\int_{-\infty}^{\infty} \Bigg| \frac{F_{y_{j-1}} ( 1/2 + 1/\log X_l + it )}{1 / \log X_l + it} \Bigg|^2 \,dt \leq \int_{-1/\log y_{j-1}}^{1/\log y_{j-1}} + \sum_{\substack{1/\log y_{j-1} \leq |T| \leq 1/2 \\ T \text{ dyadic}}} \int_{T}^{2T} + \int_{1/2}^\infty + \int_{-\infty}^{-1/2} , \label{equ:2.17} \tag{2.17} 
\end{align*}
where each integrand is the same as that on the left hand side. Here, ``$T$ dyadic'' means that we will consider $T = 2^{n}/\log y_{j-1}$ so that $T$ lies in the given range. Negative $T$ are considered similarly, and one should make the appropriate adjustments in accordance with Remark \ref{r:integralsignremark}. For the first integral on the right hand side of \eqref{equ:2.17}, we have
\begin{align*}
\int_{-1/\log y_{j-1}}^{1/\log y_{j-1}} \Bigg|& \frac{F_{y_{j-1}}(1/2 + 1/\log X_l + it)}{1/\log X_l + it} \Bigg|^2 \, dt \\ & \leq (\log X_l )^2 \int_{-1/\log y_{j-1}}^{1/\log y_{j-1}} \Bigg| \frac{F_{y_{j-1}}(1/2 + 1/\log X_l + it)}{F_{y_{j-1}}(1/2)} \Bigg|^2 \, dt \, |F_{y_{j-1}} (1/2)|^2 \\
&\leq  \frac{l^4 (\log X_l)^2}{\log y_{j-1}} \bigl|F_{y_{j-1}} (1/2)\bigr|^2 \, ,
\end{align*}
due to conditioning on $I_{j,l}^{(1)}$ in \eqref{equ:2.16}. We proceed similarly for the second term on the right hand side of \eqref{equ:2.17}, as we have 
\begin{align*}
\sum_{\substack{1/\log y_{j-1} \leq |T| \leq 1/2 \\ T \text{ dyadic}}} & \int_{T}^{2T} \Biggl| \frac{F_{y_{j-1}}(1/2 + 1/\log X_l + it)}{1/\log X_l + it} \Biggr|^2  \, dt \\
& \leq \sum_{\substack{1/\log y_{j-1} \leq |T| \leq 1/2 \\ T \text{ dyadic}}} \frac{1}{T^2} \int_{T}^{2T} \Biggl| \frac{F_{y_{j-1}}(1/2 + 1/\log X_l + it)}{F_{e^{1/T}} (1/2)} \Biggr|^2  \, dt \, \bigl|F_{e^{1/T}} (1/2)\bigr|^2 \\
& \leq l^4 \log y_{j-1} \sup_{1/\log y_{j-1} \leq T \leq 1/2} \bigl|F_{e^{1/T}}(1/2)\bigr|^2,
\end{align*}
by the conditioning on $I^{(2)}_{j,l}$. Finally, the last two integrals can be bounded directly from the conditioning on $I_{j,l}^{(3)}$. Therefore, we find that the integral on the left hand side of \eqref{equ:2.17} is 
\[ \ll \frac{l^4 \, (\log X_l)^2 }{\log y_{j-1}} \sup_{p \leq y_{j-1}}\bigl|F_{p} (1/2)\bigr|^2,\]
and so by \eqref{equ:2.16}, we have
\begin{equation*}
\mathcal{C}_{i,j} \ll \frac{ q \,  e^{\alpha q} \, l^4 \, (\log X_l)^2 \mathbf{1}_{G_{j,l} \cap I_{j,l}}}{(\log y_{j-1})^2} \sup_{p \leq y_{j-1}}\bigl|F_{p} (1/2)\bigr|^2 . 
\end{equation*}
We bound the Euler product term using our conditioning on $G_{j,l}$ from \eqref{equ:2.07},
\begin{equation}
\mathcal{C}_{i,j} \ll  \frac{ q \,  e^{\alpha q} \, (\log X_l)^2}{l^6( \log y_{j-1})^2} \exp\Bigl(2 (1 + \varepsilon) \sqrt{\log_2 {X}_{l-1} \log_4 {X}_{l-1}}\Bigr) .
\label{equ:2.18} \tag{2.18}
\end{equation}  
\subsection{Bounding the error term $\mathcal{D}_{i,j}$} \label{s:Dijterm}
We now proceed with bounding $\mathbb{E} \bigl( \mathcal{D}_{i,j}^q \bigr)$, where $\mathcal{D}_{i,j}$ is defined in \eqref{equ:2.12}. Similarly to Harper \cite{HarperHighMom} (in `Proof of Propositions 4.1 and 4.2') we first consider $\bigl( \mathbb{E} \bigl( \mathcal{D}_{i,j}^q \bigr) \bigr)^{1/q}$, giving us access to Minkowski's inequality. By definition, we have
\begin{align*}
\bigl( \mathbb{E} \bigl( \mathcal{D}_{i,j}^q \bigr) \bigr)^{1/q} = \Biggl[ \E \Biggl( \sum_{\substack{y_{j-1} < m \leq x_i \\ p|m \Rightarrow p \in (y_{j-1},y_j]}} \frac{V \tau_q (m)}{m^2} \int_m^{m(1+1/V)}  \Biggl| \sum_{\substack{x_i /t < n \leq x_i/m \\ P(n) \leq y_{j-1}}} \frac{f(n)}{\sqrt{n}} \Biggr|^2 \, dt \Biggr)^q \Biggr]^{1/q},
\end{align*}
and by Minkowski's inequality,
\begin{align*}
\bigl( \mathbb{E} \bigl( \mathcal{D}_{i,j}^q \bigr) \bigr)^{1/q} \leq \sum_{\substack{y_{j-1} < m \leq x_i \\ p|m \Rightarrow p \in (y_{j-1},y_j]}} \frac{\tau_q (m)}{m} \Biggr[ \E \Biggl( \frac{V}{m} \int_m^{m(1+1/V)}  \Biggl| \sum_{\substack{x_i /t < n \leq x_i/m \\ P(n) \leq y_{j-1}}} \frac{f(n)}{\sqrt{n}} \Biggr|^2 \, dt \Biggr)^q \Biggr]^{1/q}.
\end{align*}
Now applying Hölder's inequality (noting that the integral is normalised) and splitting the outer sum over $m$ at $x_i/V$, we have
\begin{align*}
\bigl( \mathbb{E} \bigl( \mathcal{D}_{i,j}^q \bigr) \bigr)^{1/q} & \leq \sum_{\substack{y_{j-1} < m \leq x_i/V \\ p|m \Rightarrow p \in (y_{j-1},y_j]}} \frac{\tau_q (m)}{m} \Biggr[ \frac{V}{m} \int_m^{m(1+1/V)} \E \Biggl| \sum_{\substack{x_i /t < n \leq x_i/m \\ P(n) \leq y_{j-1}}} \frac{f(n)}{\sqrt{n}} \Biggr|^{2q} \, dt \Biggr]^{1/q} \tag{2.19} \label{equ:2.19} \\ & + \sum_{\substack{x_i / V < m \leq x_i \\ p|m \Rightarrow p \in (y_{j-1},y_j]}} \frac{\tau_q (m)}{m} \Biggr[ \frac{V}{m} \int_m^{m(1+1/V)} \E \Biggl| \sum_{\substack{x_i /t < n \leq x_i/m \\ P(n) \leq y_{j-1}}} \frac{f(n)}{\sqrt{n}} \Biggr|^{2q} \, dt \Biggr]^{1/q}.
\end{align*}
We will show that these terms on the right hand side are small. Beginning with the second term, we note that the length of the innermost sum over $n$ is at most $\frac{x_i}{m} \bigl(1 - \frac{1}{1+1/V}\bigr)$, and since $m > x_i /V$, this is $\leq \frac{1}{1+1/V} < 1$. Therefore, the innermost sum contains at most one term, giving the upper bound
\begin{align*}
\sum_{\substack{x_i / V < m \leq x_i \\ p|m \Rightarrow p \in (y_{j-1},y_j]}} \frac{\tau_q (m)}{m} \Biggr[ \frac{V}{m} \int_m^{m(1+1/V)} \frac{t^q}{x_i^q} \, dt \Biggr]^{1/q} \leq \frac{2}{x_i}\sum_{\substack{x_i / V < m \leq x_i \\ p|m \Rightarrow p \in (y_{j-1},y_j]}} \tau_q (m),
\end{align*}
where we have taken the maximum value of $t$ in the integral and assumed that $1+1/V < 2$, since $V$ will go to infinity with $i$. Similarly to \eqref{equ:2.14}, we use sub-multiplicativity of $\tau_q (m)$ and apply Number Theory Result \ref{nt} (whose conditions are certainly satisfied on the same assumptions as for \eqref{equ:2.14}), giving a bound
\begin{align*}
\leq \frac{2}{x_i}\sum_{\substack{x_i / V < m \leq x_i \\ p|m \Rightarrow p \in (y_{j-1},y_j]}} q^{\Omega(m)} \ll \frac{q}{\log y_{j-1}} \prod_{y_{j-1} < p \leq y_{j-1}} \Bigl( 1 - \frac{q}{p} \Bigr)^{-1} \ll \frac{q e^{\alpha q}}{\log y_{j-1}}, \tag{2.20} \label{equ:2.20}
\end{align*}
which will turn out to be a sufficient bound for our purpose. We now bound the first term of \eqref{equ:2.19}, which requires a little more work. We first use Lemma \ref{L:Expectation} to evaluate the expectation in the integrand. This gives the upper bound
\begin{align*}
\sum_{\substack{y_{j-1} < m \leq x_i/V \\ p|m \Rightarrow p \in (y_{j-1},y_j]}} \frac{\tau_q (m)}{m} \Biggr[ \frac{V}{m} \int_m^{m(1+1/V)} \biggl( \sum_{\substack{x_i /t < n \leq x_i/m \\ P(n) \leq y_{j-1}}} \frac{\tau_q (n)}{n} \biggr)^q dt \Biggr]^{1/q}.
\end{align*}
Applying Cauchy--Schwarz, we get an upper bound of
\begin{align*}
& \sum_{\substack{y_{j-1} < m \leq x_i/V \\ p|m \Rightarrow p \in (y_{j-1},y_j]}} \frac{\tau_q (m)}{m} \Biggr[ \frac{V}{m} \int_m^{m(1+1/V)} \Biggl( \biggl( \sum_{\substack{x_i /t < n \leq x_i/m \\ P(n) \leq y_{j-1}}} \frac{1}{n^2} \biggr) \biggl( \sum_{\substack{x_i /t < n \leq x_i/m \\ P(n) \leq y_{j-1}}} {\tau_q}^2 (n) \biggr) \Biggr)^{q/2} dt  \Biggr]^{1/q} \\
\leq & \, \sum_{\substack{y_{j-1} < m \leq x_i/V \\ p|m \Rightarrow p \in (y_{j-1},y_j]}} \frac{\tau_q (m)}{m} \biggl( \sum_{x_i /m(1+1/V) < n \leq x_i/m} \frac{1}{n^2} \biggr)^{1/2} \biggl( \sum_{n \leq x_i/m
} {\tau_{q^2}} (n) \biggr)^{1/2},
\end{align*}
where we have taken $t$ maximal and used the fact that $\tau_q (n)^2 \leq \tau_{q^2} (n)$. By a length-max estimate, one can find that $\sum_{x_i /m(1+1/V) < n \leq x_i/m} \frac{1}{n^2} \ll \frac{m}{x_i V}$. Furthermore, using the fact that $\sum_{n \leq x} \tau_k (x) \leq x(2\log x)^{k-1} $ for $x\geq 3$, $k \geq 1$ (see Lemma 3.1 of \cite{BNR}), we obtain the bound
\begin{align*}
\ll \frac{1}{V^{1/2}} \sum_{\substack{y_{j-1} < m \leq x_i/V \\ p|m \Rightarrow p \in (y_{j-1},y_j]}} \frac{\tau_q (m)}{m} \bigl( 2 \log x_i \bigr)^{q^2/2} ,
\end{align*}
Completing the sum over $m$, we have the upper bound
\begin{align*}
\ll \frac{1}{V^{1/2}} \sum_{\substack{m \geq 1 \\ p|m \Rightarrow p \in (y_{j-1},y_j]}} \frac{\tau_q (m)}{m} \bigl( 2 \log x_i \bigr)^{q^2/2} & \ll \frac{1}{V^{1/2}}\bigl( 2 \log x_i \bigr)^{q^2/2} \prod_{y_{j-1} < p \leq y_j} \Bigl( 1 - \frac{1}{p} \Bigr)^{-q} \\
& \ll \frac{2^{q^2/2} e^{\alpha q }(\log x_i)^{q^2/2}}{V^{1/2}}.
\end{align*}
Combining this bound with the bound for the second term \eqref{equ:2.20}, we get a bound for the right hand side of \eqref{equ:2.19}, from which it follows that
\begin{align*}
\E \bigl( \mathcal{D}_{i,j}^q \bigr) \leq K^q \Biggl( \frac{q e^{\alpha q}}{\log y_{j-1}} + \frac{2^{q^2/2} e^{\alpha q }(\log x_i)^{q^2/2}}{V^{1/2}} \Biggr)^q ,
\end{align*}
for some absolute constant $K>0$. Taking $V = (\log x_i)^{2q^2}$, and $\alpha = 1/q$, this bound will certainly be negligible compared to the main term coming from \eqref{equ:2.18}.  We remark that this value of $V$ is appropriate for use in Number Theory Result \ref{nt} in \eqref{equ:2.14} and \eqref{equ:2.20}.

\subsection{Completing the proof of Proposition \ref{p:sumthenprop}}
Since the main term from \eqref{equ:2.18} dominates the error term above, from \eqref{equ:2.11} we obtain that
\[ \E (|S_{i,j}|^{2q} \mathbf{1}_{G_{j,l} \cap I_{j,l}} ) \leq \Biggl( \frac{R_\varepsilon \, q \, (\log X_l)^2 \,  \exp\bigl(2 (1 + \varepsilon) \sqrt{ \log_2 {X}_{l-1} \log_4 {X}_{l-1}}\bigr)}{l^6 (\log y_{j-1})^2} \Biggr)^q . \]
for some positive constant $R_\varepsilon$ from the ``Big Oh'' implied constant in \eqref{equ:2.18}. Now \eqref{equ:2.10} gives a bound on the probability
\begin{align*} \p(\mathcal{B}_{1,k} \cap G_k \cap I_{k}) &\leq \sum_{\tilde{X}_{k-1} \leq X_{l-1} < \tilde{X}_k} \sum_{X_{l-1} \leq x_i < X_l} \sum_{1 \leq j \leq J} J^{2q-1} \Biggl( \frac{R_\varepsilon \, q \, (\log X_l)^2 }{l^{6} (\log y_{j-1})^2} \Biggr)^q \\
& \leq \sum_{\tilde{X}_{k-1} \leq X_{l-1} < \tilde{X}_k} \sum_{X_{l-1} \leq x_i < X_l} \Biggl( \frac{16 R_\varepsilon \, J^2 \, q}{c^2 \, l^{4}} \Biggr)^q . \end{align*}
We take $q = \lfloor \rho^k \rfloor = \lfloor \log \log \tilde{X}_k \rfloor$, which satisfies the assumptions for Number Theory Result \ref{nt} in \eqref{equ:2.14} and \eqref{equ:2.20}. Using the fact $J \ll \rho^k \log l \ll k \rho^k$ from \eqref{equ:2.02}, and noting that there are no more than $e^{l/c}$ terms in the innermost sum, and no more than $\rho^k$ terms in the outermost sum, and that $\rho^{k-1} \leq l \leq \rho^{k+1}$ for large $k$, we find that taking trivial bounds gives
\begin{align*}
\p(\mathcal{B}_{1,k} \cap G_k \cap I_{k}) &\ll \biggl( \frac{R'_\varepsilon k^2}{\rho^{k}} \biggr)^{\lfloor\rho^k\rfloor},
\end{align*}
when $k$ is sufficiently large, for some constant $R'_\varepsilon>0$ depending only on $\varepsilon$ (since $\rho>1$ depends only on $\varepsilon$). Therefore, $\p(\mathcal{B}_{1,k} \cap G_k \cap I_{k})$ is summable. Recalling \eqref{equ:2.05}, this completes the proof of Proposition \ref{p:sumthenprop}.
\end{proof}

\subsection{Law of the iterated logarithm-type bound for the Euler product} \label{s:loiltyperesult}   

In this subsection, we prove that $\p(G_k^c)$ (as defined in \eqref{equ:2.08}) is summable. Recall $\tilde{X}_k = e^{e^{\rho^{k}}}$ for some $\rho > 1$ depending on $\varepsilon$, chosen shortly. It suffices to prove that 
\begin{equation*}
\p \Biggl( \sup_{\tilde{X}_{k-1} \leq X_{l-1} < \tilde{X}_k} \frac{ \sup_{p \leq X_l} |F_p (1/2)|}{\exp\Bigl((1 + \varepsilon/2)\sqrt{ \log_2 \tilde{X}_{k-1} \log_4 \tilde{X}_{k-1}}\Bigr)} > 1 \Biggr), \label{equ:2.21} \tag{2.21}
\end{equation*}
is summable in $k$, noting that $l^5 = (\log_2 X_l)^5 = o(\exp({\sqrt{\log_2 X_{l-1}}}))$, and so we removed the $l^5$ factor in \eqref{equ:2.08} by altering $\varepsilon$ in the denominator.
To prove \eqref{equ:2.21}, we will utilise two standard results from probability.
\begin{probres}[Lévy inequality, Theorem 3.7.1 of \citet{Gut_2013}] \label{levy}
Let $X_1, X_2,...$ be independent, symmetric random variables and $S_n = X_1 + X_2 + ... + X_n$. Then for any $x$,
\[ \p(\max_{1 \leq m \leq n} S_m > x) \leq 2 \p (S_n > x) .\]
\end{probres}
Our $S_m$ will more or less be the random walk $\sum_{p \leq m} \Re f(p)/\sqrt{p}$. This result tells us that the distribution of the maximum of a random walk is controlled by the distribution of the endpoint, allowing us to remove the supremum in \eqref{equ:2.21}. The next result will allow us to handle the resulting term.
\begin{probres}[Upper exponential bound, Lemma 8.2.1 of \citet{Gut_2013}] \label{UEB}
Let $X_1, X_2,...$ be mean zero independent random variables. Let $\sigma_k^2 = \var X_k$, and $s_n^2 = \sum_{k=1}^n \sigma_k^2$. Furthermore, suppose that, for $c_n > 0$, \[ |X_k| \leq c_n s_n \text{ a.s. \, for } k = 1,2,...,n .\]
Then, for $0 < x < 1/c_n$,
\[ \p \biggl( \sum_{k=1}^n X_k > x s_n \biggr) \leq \exp \biggl( - \frac{x^2}{2} \Bigl( 1 - \frac{x c_n}{2} \Bigr) \biggr) \, . \]
\end{probres}
We proceed by writing the probability in \eqref{equ:2.21} as 
\begin{equation*}
\p \Biggl( \sup_{x \leq Z} \frac{\Big|\prod_{p \leq x} \Bigl(1 - \frac{f(p)}{\sqrt{p}}\Bigr)^{-1}\Big|}{\exp\Bigl((1 + \varepsilon/2)\sqrt{ \log_2 \tilde{X}_{k-1} \log_4 \tilde{X}_{k-1}}\Bigr)} > 1  \Biggr),
\end{equation*}
where $Z = \exp(\exp(\lceil \rho^k \rceil))$ is the largest possible value that $X_l$ can take; it is minimal so that $Z = X_l > \tilde{X}_k$. Taking the exponential of the logarithm of the numerator, the above probability is equal to
\begin{align*}
&\,\p \biggl( \sup_{x \leq Z} -\sum_{p \leq x} \Re \log \biggl(1 - \frac{f(p)}{\sqrt{p}}\biggr) > (1 + \varepsilon/2) \sqrt{ \log_2 \tilde{X}_{k-1} \log_4 \tilde{X}_{k-1}} \biggr) \\
= &\, \p \biggl( \sup_{x \leq Z} \sum_{p \leq x} \sum_{k \geq 1} \frac{\Re f(p)^k}{k p ^{k/2}} > (1 + \varepsilon/2) \sqrt{ \log_2 \tilde{X}_{k-1} \log_4 \tilde{X}_{k-1}} \biggr) \\
\leq &\, \p \biggl( \sup_{x \leq Z} \sum_{p \leq x} \biggl( \frac{\Re f(p)}{\sqrt{p}} + \frac{\Re f(p)^2}{2p} \biggr) > (1 + \varepsilon/3) \sqrt{ \log_2 \tilde{X}_{k-1} \log_4 \tilde{X}_{k-1}} \biggr) \\
\leq &\, \p \biggl( \sup_{x \leq Z} \sum_{p \leq x} \frac{\Re f(p)}{\sqrt{p}} > (1 + \varepsilon/4) \sqrt{ \log_2 \tilde{X}_{k-1} \log_4 \tilde{X}_{k-1}} \biggr) \\
& + \, \p \biggl( \sup_{x \leq Z} \sum_{p \leq x}  \frac{\Re f(p)^2}{2p} > \frac{\varepsilon}{12} \sqrt{ \log_2 \tilde{X}_{k-1} \log_4 \tilde{X}_{k-1}} \biggr)
\end{align*}
These probabilities can be bounded by the Lévy inequality, Probability Result \ref{levy}. The second probability is then summable by Markov's inequality with second moments. It remains to show that
\begin{align*} &\p \biggl( \sum_{p \leq Z} \frac{\Re f(p)}{\sqrt{p}}  > (1 + \varepsilon/4)\sqrt{ \log_2 \tilde{X}_{k-1} \log_4 \tilde{X}_{k-1}} \biggr), \label{equ:2.22} \tag{2.22} \end{align*}
is summable, which we prove using the upper exponential bound (Probability Result \ref{UEB}). By a straightforward calculation using the fact that $2\Re (z) = z + \bar{z}$, we have $\var [\Re f(p)/\sqrt{p} ] = 1/2p$. Therefore we have $s_Z^2 = \sum_{p \leq Z} 1/2p $. Let $c_Z = 2 / s_Z$. Certainly such a choice satisfies $|\Re f(p)/\sqrt{p}| \leq c_Z s_Z  $ for all primes $p$, so Probability Result \ref{UEB} implies that for any $x \leq 1 / c_Z = s_Z / 2$,
\[ \p \Biggl( \sum_{p \leq Z} \frac{\Re f(p)}{\sqrt{p}} > x \Bigl( \sum_{p \leq Z} \frac{1}{2p} \Bigr)^{1/2} \Biggr) \leq \exp \Biggl( - \frac{x^2}{2} \Biggl( 1 - \frac{x}{\bigl( \sum_{p \leq Z} 1/2p\bigr)^{1/2}}  \Biggr) \Biggr). \]
We take \[ x = (1 + \varepsilon/4) \Biggl(\frac{\log_2 \tilde{X}_{k-1} \log_4 \tilde{X}_{k-1}}{\sum_{p \leq Z} 1/2p}\Biggr)^{1/2}. \]
Recall that $Z = \exp(\exp(\lceil \rho^k \rceil ))$. Using the fact that $Z > \tilde{X}_{k-1}$, it is not hard to show that, for large $k$, this value of $x$ is applicable, seeing as $x \ll \sqrt{\log_4 Z}$ and $s_Z \gg \sqrt{\log_2 Z}$, hence $x < s_Z/2$. This value of $x$ gives an upper bound for the probability in \eqref{equ:2.22} of 
\[ \leq 2 \exp \Biggl( - \frac{(1 + \varepsilon/4)^2 \log_2 \tilde{X}_{k-1} \log_4 \tilde{X}_{k-1}}{\sum_{p \leq Z} 1/p} \Biggl(  1 - \frac{(1 + \varepsilon/4)\sqrt{ \log_2 \tilde{X}_{k-1} \log_4 \tilde{X}_{k-1}}}{\sum_{p \leq Z} 1/2p } \Biggr) \Biggr), \]
Since for large $k$ we have $\sum_{p \leq Z} 1/2p \gg \log_2 Z  \gg \log_2 \tilde{X}_{k-1}$, we find that the term in the innermost parenthesis is of size $1 + o(1)$. Furthermore, since $\sum_{p \leq Z} 1/p  = \log_2 Z + O(1) $, the previous equation is bounded above by
\[ \ll \exp\biggl( - (1 + o(1)) \frac{(1 + \varepsilon/4)^2 \log_2 \tilde{X}_{k-1} \log_4 \tilde{X}_{k-1}}{\log_2 Z + O(1)} \biggr). \]
Inserting the definitions $\tilde{X}_{k-1} = \exp(\exp(\rho^{k-1}))$ and $Z = \exp(\exp(\lceil \rho^{k} \rceil))$, this is 
\[ \ll \exp\biggl( - (1 + o(1)) \frac{(1 + \varepsilon/4)^2 \rho^{k-1} \log ((k-1) \log \rho)}{\lceil \rho^{k} \rceil + O(1)} \biggr). \]
Note that for $\rho>1$ fixed, for sufficiently large $k$ we have $\lceil \rho^k \rceil \leq \rho^{k+1}$. Therefore, the last term can be bounded above by
\[\ll \frac{1}{((k-1)\log \rho)^{(1 + \varepsilon/4)^2(1 + o(1))/\rho^2}}. \]
Taking $\rho$ sufficiently close to $1$ (in terms of $\varepsilon$), this is summable in $k$. Subsequently, the probability \eqref{equ:2.21} is summable, as required.
\subsection{Probability of complements of integral events are summable} \label{s:cond}
Here we prove that $\mathbb{P}(I_k^c)$ is summable. Recalling \eqref{equ:2.07} and \eqref{equ:2.08}, we note that by the union bound, it suffices to show that the following are summable.
\begin{align*} I_{1,k}^c &\coloneqq \bigcup_{l \,:\, \tilde{X}_{k-1} \leq X_{l-1} < \tilde{X}_k} \bigcup_{j=1}^J \Biggl\{ \int_{-1/\log y_{j-1}}^{1/\log y_{j-1}}  \Bigg|\frac{F_{y_{j-1}}(1/2 + 1/\log X_l + it)}{F_{y_{j-1}} (1/2)} \Bigg|^2 \, dt > \frac{l^4}{\log y_{j-1}}\Biggr\} , \tag{2.23} \label{equ:2.23} \\
I_{2,k}^c &\coloneqq \bigcup_{l \,:\, \tilde{X}_{k-1} \leq X_{l-1} < \tilde{X}_k} \bigcup_{j=1}^J \Biggl\{ \sum_{\substack{1/\log y_{j-1} \leq |T| \leq 1/2 \\ T \text{ dyadic}}} \frac{1}{T^2} \int_{T}^{2T} \Bigg| \frac{F_{y_{j-1}}(1/2 + 1/\log X_l + it)}{F_{e^{1/T}} (1/2)} \Bigg|^2  \, dt  > l^4 \log y_{j-1} \Biggr\} , \\
I_{3,k}^c &\coloneqq \bigcup_{l \,:\, \tilde{X}_{k-1} \leq X_{l-1} < \tilde{X}_k} \bigcup_{j=1}^J \Biggl\{ \int_{1/2}^{\infty} \frac{\bigl|F_{y_{j-1}}\bigl(\frac{1}{2} + \frac{1}{\log X_l} + it\bigr)\bigr|^2 + \bigl|F_{y_{j-1}}\bigl(\frac{1}{2} + \frac{1}{\log X_l} - it\bigr)\bigr|^2}{t^2} \, dt \, > l^4 \log y_{j-1} \Biggr\}. \end{align*}
To prove that these events have summable probabilities, we wish to apply Markov's inequality, and so we need to be able to evaluate the expectation of the integrands. We employ the following result, which is similar to Lemma 3.1 of \citet{gerspach2022low}.
\begin{ep} \label{eulerproductresult}
For any $\sigma >0, \, t \in \R$, and any $x,y \geq 2$ such that $x \leq y$ and $\sigma \log y \leq 1$, we have 
\[ \E \Biggl| \frac{F_y (1/2 + \sigma + it)}{F_x (1/2)} \Biggr|^2 \ll \exp{(C t^2 (\log x)^2)} \Bigl( \frac{\log y}{\log x} \Bigr), \]
for some absolute constant $C>0$, and where the implied constant is also absolute.
\end{ep}
\begin{remark}
\normalfont Our choices for the range of the integrals and the denominators in our integrands, made in subsection \ref{s:Cijterm}, ensure that $|t|(\log x)$ is bounded when we apply the above result.
\end{remark}
\begin{proof}
The proof follows from standard techniques used in Euler Product Result 1 of \citet{harper2020sure}, the key difference being that we do not have $\sigma$ in the argument of the denominator. We therefore find that
\begin{align*}
\E\Biggl| \frac{F_y (1/2 + \sigma + it)}{F_x (1/2)} \Biggr|^2 &= \prod_{p \leq x} \Biggl( 1 + \frac{|p^{- \sigma - it} - 1|^2}{p} + O \Bigl( \frac{1}{p^{3/2}} \Bigr) \Biggr) \prod_{x < p \leq y} \Biggl( 1 + \frac{1}{p^{1 + 2 \sigma}} + O\Bigl( \frac{1}{p^{3/2}} \Bigr) \Biggr) \\
& \ll \exp \biggl( \sum_{p \leq x} \frac{|p^{-\sigma - it}-1|^2}{p} \biggr) \biggl( \frac{\log y}{\log x} \biggr). \tag{2.24} \label{equ:2.24}
\end{align*} 
To bound the first term, we use the fact that $\cos x \geq 1-x^2$ for all $x \in \R$, giving
\begin{align*}
|p^{-\sigma - it} -1 |^2 &= p^{-2\sigma} - 2p^{-\sigma} \cos (t \log p) + 1 \\
& \leq p^{-2\sigma} - 2p^{-\sigma} + 1 + 2p^{-\sigma} t^2 (\log p)^2 \\
& \leq (p^{-\sigma}-1)^2 + 2p^{-\sigma} t^2 (\log p)^2 \\
& \leq \sigma^2 (\log p)^2 + 2t^2 (\log p)^2,
\end{align*}
where on the last line we have used the fact that $|e^{-x} - 1| \leq x$ for $x > 0$. Inserting this into \eqref{equ:2.24} gives
\begin{align*}
\E\Biggl| \frac{F_y (1/2 + \sigma + it)}{F_x (1/2)} \Biggr|^2 &\ll \exp\biggl( \sum_{p \leq x} \frac{\sigma^2 (\log p)^2 + 2t^2 (\log p)^2}{p}\biggr) \biggl( \frac{\log y}{\log x} \biggr) \\
& \ll \exp \Bigl( C (\sigma^2  + 2t^2)(\log x)^2 \Bigr) \biggl( \frac{\log y}{\log x} \biggr),
\end{align*}
using the fact that $\sum_{p \leq x} (\log p)^2/p \leq C (\log x)^2$ for some $C>0$ to obtain the last line. The desired result (upon exchanging $2C$ for $C$) follows by noting that $\sigma \log x \leq \sigma \log y \leq 1$.
\end{proof}
Equipped with this result, we apply the union bound and Markov's inequality with first moments to show that each of the events in \eqref{equ:2.23} have probabilities that are summable. For the first event, this gives
\begin{align*}
\p(I_{1,k}^c) \leq \sum_{\tilde{X}_{k-1} \leq X_{l-1} < \tilde{X}_k} \sum_{j=1}^J \frac{\log y_{j-1}}{l^4} \int_{-1/\log y_{j-1}}^{1/ \log y_{j-1}} \E \Biggl|\frac{F_{y_{j-1}}(1/2 + 1/\log X_l + it)}{F_{y_{j-1}} (1/2)} \Biggr|^2 \, dt ,
\end{align*}
Now, by Euler Product Result \ref{eulerproductresult}, for some absolute constant $C>0$, we have
\begin{align*}
\p(I_{1,k}^c) &\leq \sum_{\tilde{X}_{k-1} \leq X_{l-1} < \tilde{X}_k} \sum_{j=1}^J \frac{\log y_{j-1}}{l^4}  \int_{-1/\log y_{j-1}}^{1/ \log y_{j-1}} \exp\bigl(C t^2 (\log y_{j-1})^2\bigr) \, dt \\ & \ll 
\sum_{\tilde{X}_{k-1} \leq X_{l-1} < \tilde{X}_k} \sum_{j=1}^J \frac{1}{l^4} \ll \sum_{\tilde{X}_{k-1} \leq X_{l-1} < \tilde{X}_k} \frac{\rho^k \log \rho^k}{l^4} \ll \frac{k}{\rho^{2k}},
\end{align*}
where in the second inequality we have used the fact that the integrand is bounded. Therefore $\p(I_{1,k}^c)$ is summable. The probability of the second event, $\p(I_{2,k}^c)$, can be handled almost identically. To show that $\p(I_{3,k}^c)$ is summable, we note that $\E |F_{y_{j-1}} (1/2 + 1 / \log X_l + it)|^2 \ll \log y_{j-1}$ (this is a fairly straightforward calculation and follows from Euler Product Result 1 of \citet{harper2020sure}), and one can then apply an identical strategy to the above. Note that we can apply Fubini's Theorem in this case, since the integrand is absolutely convergent. \\

Therefore we have verified the assumptions of Proposition \ref{p:sumthenprop}, completing the proof of the upper bound, Theorem \ref{T:1}.
\section{Lower Bound}
 In this section, we give a proof of Theorem \ref{T:2}. We shall prove that for any $\varepsilon > 0$,
 \begin{equation}
 \p \biggl( \max_{t \in [T_{k-1},T_k]} |M_f (t)|^2 \geq \exp\Bigl(2(1 - \varepsilon)\sqrt{ \log_2 T_k \log_4 T_k}\Bigr)\text{ i.o.} \biggr) = 1, \label{equ:3.01} \tag{3.01}
 \end{equation}
 for some intervals $(T_k)$, from which Theorem \ref{T:2} follows.
 \begin{proof}
Fix $\varepsilon>0$ and assume that it is sufficiently small throughout the argument, and that $k$ is sufficiently large. Implied constants from $\ll$ or ``Big Oh'' notation will depend on $\varepsilon$, unless stated otherwise. We take $T_k = \exp(\exp(\lambda^k))$, for some fixed $\lambda > 1$ (depending only on $\varepsilon$) chosen later. These intervals are of similar shape to the intervals $\tilde{X}_k$ in the upper bound, however here we will take $\lambda$ to be very large. Doing this allows for use of Borel--Cantelli lemma 2, seen as the terms we obtain, $ \sum_{p \leq T_k} \Re f(p)/\sqrt{p}$, will be controlled by the independent sums $\sum_{T_{k-1} < p \leq T_k} \Re f(p)/\sqrt{p}$. This is an approach taken in many standard proofs of the lower bound in the law of the iterated logarithm (see, for example, section 3.9 of Varadhan \cite{varadhan2001probability}). \\

Since $\int_{T_{k-1}}^{T_k} 1/t \, dt \leq \log T_k$, we have
\begin{equation} 
\max_{t \in [T_{k-1},T_k]} |M_f (t)|^2 \geq \frac{1}{\log T_k} \int_{T_{k-1}}^{T_k} \frac{|M_f(t)|^2}{t^{1 + 2 \log \log T_k / \log T_k}} \, dt, \label{equ:3.02} \tag{3.02}
\end{equation}
where the $2 \log \log T_k / \log T_k$ term has been introduced to allow use of Harmonic Analysis Result \ref{ha-result} at little cost, similarly to \eqref{equ:2.15}, whilst being sufficiently large so that we can complete the upper range of the integral without compromising our lower bound. \\

We now complete the range of the integral so that it runs from $1$ to infinity. For the lower range, by Theorem \ref{T:1}, we almost surely have, say,
 \begin{align*}
\frac{1}{\log T_k} \int_{1}^{T_{k-1}} \frac{|M_f(t)|^2}{t^{1 + 2 \log \log T_k / \log T_k}} \, dt \ll \exp{\bigl(3 \sqrt{\log_2 T_{k-1} \log_4 T_{k-1}}} \bigr). \label{equ:3.03} \tag{3.03}
 \end{align*}
Whereas for the upper integral, we almost surely have
\begin{equation}
\int_{T_k}^\infty \frac{\Bigl|\sum_{\substack{n \leq t \\ n \, T_{k}-\text{smooth}}}\frac{f(n)}{\sqrt{n}} \Bigr|^2}{t^{1 + 2 \log \log T_k / \log T_k}} dt \, \leq 1 ,
\label{equ:3.04} \tag{3.04}
\end{equation}
for sufficiently large $k$. This follows from the first Borel--Cantelli lemma, since Markov's inequality followed by Fubini's Theorem gives
\begin{align*}
\p \Biggl( \int_{T_k}^\infty \frac{\bigl|\sum_{\substack{n \leq t \\ n \, T_{k}-\text{smooth}}}\frac{f(n)}{\sqrt{n}} \bigr|^2}{t^{1 + 2 \log \log T_k / \log T_k}} \, dt \, > 1 \Biggr)&\leq \int_{T_k}^\infty \frac{\E\bigl|\sum_{\substack{n \leq t \\ n \, T_{k}-\text{smooth}}}\frac{f(n)}{\sqrt{n}} \bigr|^2}{t^{1 + 2 \log \log T_k / \log T_k}} \, dt \\
& \ll \int_{T_k}^\infty \frac{\log T_k}{t^{1 + 2 \log \log T_k / \log T_k}} \, dt\ll \frac{1}{\log \log T_{k}} = \frac{1}{\lambda^k},
\end{align*}
which is summable. Now combining \eqref{equ:3.02}, \eqref{equ:3.03} and \eqref{equ:3.04} we have that almost surely, for large $k$,
\begin{equation}
\max_{t \in [T_{k-1},T_k]} |M_f (t)|^2 \geq \frac{1}{\log T_k} \int_{1}^{\infty} \frac{\Bigl|\sum_{\substack{n \leq t \\ n \, T_k - \text{smooth}}} \frac{f(n)}{\sqrt{n}}\Bigr|^2}{t^{1 + 2 \log \log T_k / \log T_k}} \, dt - C\exp{\Bigl(3 \sqrt{\log_2 T_{k-1} \log_4 T_{k-1}}\Bigr)}, \label{equ:3.05} \tag{3.05} \end{equation}
for some constant $C>0$. We proceed by trying to lower bound the first term on the right hand side of this equation. By Harmonic Analysis Result \ref{ha-result}, we have
\begin{align*}
\frac{1}{\log T_k} \int_{1}^{\infty} \frac{\Bigl|\sum_{\substack{n \leq t \\ n \, T_k - \text{smooth}}} \frac{f(n)}{\sqrt{n}}\Bigr|^2}{t^{1 + 2 \log \log T_k / \log T_k}} \, dt &= \frac{1}{2 \pi \log T_k} \int_{-\infty}^\infty \Biggl| \frac{F_{T_k}(1/2 + \log \log T_k / \log T_k + it)}{\log \log T_k / \log T_k + i t} \Biggr|^2 \, dt \\
& \geq \frac{(1 + o(1))\log T_k}{2 \pi (\log \log T_k)^2} \int_{\frac{-1}{2\log T_k}}^{\frac{1}{2\log T_k}} \biggl| F_{T_k}\biggl(\frac{1}{2} + \frac{\log \log T_k}{ \log T_k} + it\biggr) \biggr|^2 \, dt .
\end{align*}
This last term on the right hand side is equal to
\[ \frac{1+o(1)}{2 \pi (\log \log T_k)^2} \int_{\frac{-1}{2\log T_k}}^{\frac{1}{2\log T_k}} \exp\biggl(2 \log \biggl| F_{T_k}\biggl(\frac{1}{2} + \frac{\log \log T_k}{ \log T_k} + it\biggr) \biggr|\biggr) \, \log T_k \,  dt. \]
Note that $\log T_k \, dt$ is a probability measure on the interval that we are integrating over. Since the exponential function is convex, we can apply Jensen's inequality as in the work of \citet{harper2020moments}, section 6, (see also \citet{AHZ}, section 4) to obtain the following lower bound for the first term on the right hand side of \eqref{equ:3.05}
\begin{align*} 
& \, \frac{1+o(1)}{2 \pi (\log \log T_k)^2} \exp \Biggl( \int_{\frac{-1}{2\log T_k}}^{\frac{1}{2\log T_k}} 2 \log \biggl| F_{T_k}\biggl(\frac{1}{2} + \frac{\log \log T_k}{ \log T_k} + it\biggr) \biggr| \log T_k \,  dt \Biggr)  \\
= & \, \frac{1+o(1)}{2 \pi (\log \log T_k)^2} \exp \Biggl( \int_{\frac{-1}{2\log T_k}}^{\frac{1}{2\log T_k}} -2  \sum_{p \leq T_k} \Re \log \biggl( 1 - \frac{f(p)}{p^{1/2 + \log \log T_k / \log T_k + it}} \biggr) \log T_k \,  dt \Biggr) \\
= & \, \frac{1+o(1)}{2 \pi (\log \log T_k)^2} \exp \Biggl( 2 \log T_k  \sum_{p \leq T_k} \int_{\frac{-1}{2\log T_k}}^{\frac{1}{2\log T_k}} \frac{\Re f(p)}{p^{1/2 + \sigma_k + it}}  + \frac{\Re f(p)^2}{2p^{1 + 2 \sigma_k + 2it}} + O \biggl( \frac{1}{p^{3/2}} \biggr) dt \Biggr),
\end{align*}
where $\sigma_k = \log \log T_k / \log T_k$. Since $1/p^{3/2}$ is summable over primes, this term can be bounded below by
\begin{equation*} \frac{c'}{(\log \log T_k)^2} \exp \Biggl( 2 \log T_k  \sum_{p \leq T_k} \int_{\frac{-1}{2\log T_k}}^{\frac{1}{2\log T_k}} \frac{\Re  f(p) }{p^{1/2 + \sigma_k + it}}  + \frac{\Re f(p)^2}{2p^{1 + 2 \sigma_k + 2it}}  dt \Biggr), \end{equation*}
for some constant $c' > 0$. The argument of the exponential is very similar to $\sum_{p \leq T_k} \Re f(p)/p^{1/2}$, which puts us in good stead for the law of the iterated logarithm. \\~\\
Note that
\begin{align*}
\int_{\frac{-1}{2\log T_k}}^{\frac{1}{2\log T_k}} p^{-it} \, dt & = \frac{2 \sin \bigl( \frac{\log p}{2\log T_k}  \bigr)}{\log p}, \text{ and } \, \,
\int_{\frac{-1}{2\log T_k}}^{\frac{1}{2\log T_k}} p^{-2it} \, dt = \frac{1}{\log T_k} + O \biggl( \frac{(\log p)^2}{(\log T_k)^3} \biggr).
\end{align*}
Therefore, we get a lower bound for the first term on the right hand side of \eqref{equ:3.05} of
\begin{align*}
\frac{c'}{(\log \log T_k)^2} \exp \Biggl( 2 \log T_k  \sum_{p \leq T_k} \biggl(& \frac{2\Re f(p) \sin\bigl(\frac{\log p}{2 \log T_k}\bigr)}{p^{1/2 + \sigma_k} \log p }  + \frac{\Re  f(p)^2 }{2p^{1 + 2 \sigma_k} \log T_k} + O \biggl( \frac{(\log p)^2}{p( \log T_k)^3}\biggr) \biggr) \Biggr) \\
\geq  \frac{c''}{(\log \log T_k)^2} \exp \Biggl(& 2  \sum_{p \leq T_k} \biggl( \frac{2\Re f(p) (\log T_k) \sin\bigl(\frac{\log p}{2 \log T_k}\bigr)}{p^{1/2 + \sigma_k} \log p }  + \frac{\Re f(p)^2}{2p^{1 + 2 \sigma_k}}\biggr) \Biggr) , \label{equ:3.06} \tag{3.06}
\end{align*}
for some constant $c'' > 0$, where we have used the fact that $\sum_{p \leq T_k} (\log p)^2/p \ll (\log T_k)^2$. 

To prove \eqref{equ:3.01}, it suffices to prove that 
\begin{equation}
\p \Bigl(\sum_{p \leq T_k}\frac{2\Re f(p) (\log T_k) \sin\bigl(\frac{\log p}{2 \log T_k}\bigr)}{p^{1/2 + \sigma_k} \log p }  + \frac{\Re f(p)^2 }{2p^{1 + 2 \sigma_k}} \geq {(1-\varepsilon/3)\sqrt{\log_2 T_k \log_4 T_k}} \text{ i.o.} \Bigr) = 1, \label{equ:3.07} \tag{3.07}
\end{equation}
since, if this were true, it would follow from \eqref{equ:3.05} and \eqref{equ:3.06} that almost surely,
\begin{align*}
\frac{\max_{t \in [T_{k-1},T_k]} |M_f (t)|^2}{\exp\bigl(2(1 - \varepsilon)\sqrt{ \log_2 T_k \log_4 T_k}\bigr)} \geq \frac{c''\exp\bigl(4\varepsilon/3 \sqrt{\log_2 T_k \log_4 T_k}\bigr)}{2 (\log \log T_k)^2}+ o ( 1 )
\end{align*}
infinitely often, and for any $\lambda > 1$, the right hand side is larger than $1$ for large $k$. \\

Therefore, to complete the proof, we just need to show that \eqref{equ:3.07} holds. This follows from a fairly straightforward application of the Berry-Esseen Theorem and the second Borel--Cantelli lemma, as in the proof of the law of the iterated logarithm in section 3.9 of Varadhan \cite{varadhan2001probability}. We first analyse the independent sums over $p$ in the disjoint ranges $(T_{k-1}, T_k]$, which will control the sum in \eqref{equ:3.07} when $\lambda$ is large. \\~\\
\begin{probres}[Berry-Esseen Theorem, Theorem 7.6.2 of \citet{Gut_2013}] \label{T:BE}
Let $X_1, X_2,...$ be independent random variables with zero mean and let $S_n = X_1 + ... + X_n$. Suppose that $\gamma_k^3 = \E |X_k|^3 < \infty$ for all $k$, and set $\sigma_k^2 = \var [ X_k] $, $s_n^2 = \sum_{k=1}^n \sigma_k^2$, and $\beta_n^3 = \sum_{k=1}^n \gamma_k^3$. Then
\[ \sup_{x \in \R} \Bigl| \p(S_n > x s_n) - \frac{1}{\sqrt{2\pi}}\int_{x}^\infty e^{-t^2/2} \, dt \,\Bigr| \leq C \frac{\beta_n^3}{s_n^3},\]
for some absolute constant $C>0$.
\end{probres}
If we take 
\begin{equation}
x = (1 - \varepsilon/2) \Biggl(\frac{\log_2 T_k \log_4 T_k}{\sum_{T_{k-1} < p \leq T_k} \frac{1}{2 p^{1 + 2 \sigma_k}} \bigl( \frac{2 \log T_k}{\log p} \bigr)^2 \sin^2 \bigl( \frac{\log p}{2 \log T_k} \bigr) + \frac{1}{8p^{2 + 4 \sigma_k}}} \Biggr)^{1/2} \, , \label{equ:3.08} \tag{3.08} \end{equation}
then, since the denominator in the parenthesis is the variance of our sum, for some constant $\tilde{C}>0$ independent of $k$, we have
\begin{align*} 
\p \Biggl(\sum_{T_{k-1} < p \leq T_k} & \frac{2\Re f(p) (\log T_k) \sin\bigl(\frac{\log p}{2 \log T_k}\bigr)}{p^{1/2 + \sigma_k} \log p }  + \frac{\Re f(p)^2}{2p^{1 + 2 \sigma_k}} \geq (1-\varepsilon/2)\sqrt{\log_2 T_k \log_4 T_k} \Biggr)\\ &\geq
\frac{1}{\sqrt{2\pi}} \int_x^\infty e^{-t^2/2} \, dt \, - \frac{\tilde{C}}{\Bigl(\sum_{T_{k-1} < p \leq T_k} \frac{1}{2 p^{1 + 2 \sigma_k}} \bigl( \frac{2 \log T_k}{\log p} \bigr)^2 \sin^2 \bigl( \frac{\log p}{2 \log T_k} \bigr) + \frac{1}{8p^{2 + 4 \sigma_k}}\Bigr)^{3/2}} \, . \label{equ:3.09} \tag{3.09}
\end{align*}
Here we have used the fact that the sums over third moments of our summand are uniformly bounded regardless of $k$, giving a bound of size $\tilde{C}$ for the $\beta_n$ terms in the Theorem. \\

To prove \eqref{equ:3.07}, it is sufficient to show that the right hand side of \eqref{equ:3.09} is not summable in $k$. The result will then follow by the second Borel--Cantelli lemma, and a short argument used to complete the lower range of the sum. Note that the second Borel--Cantelli lemma is applicable since our events are independent for distinct values of $k$. To proceed, it will be helpful to lower bound the sums of the variances,
\[ \sum_{T_{k-1} < p \leq T_k} \Bigl( \frac{1}{2 p^{1 + 2 \sigma_k}} \Bigl( \frac{2 \log T_k}{\log p} \Bigr)^2 \sin^2 \Bigl( \frac{\log p}{2 \log T_k} \Bigr) + \frac{1}{8p^{2 + 4 \sigma_k}} \Bigr). \]
By shortening the sum and noting that $\frac{1}{u^2} \sin^2 u \geq 1 - \varepsilon/4$ for $u$ sufficiently small, when $k$ is large we have the lower bound
\begin{align*}
\sum_{T_{k-1} < p \leq (1 - \varepsilon/4)^{-1 / 2 \sigma_k}} \frac{1}{2 p^{1 + 2 \sigma_k}} \Bigl( \frac{2 \log T_k}{\log p}& \Bigr)^2 \sin^2 \Bigl( \frac{\log p}{2 \log T_k} \Bigr) \geq \bigl( 1 - \varepsilon/4 \bigr) \sum_{T_{k-1} < p \leq (1 - \varepsilon/4)^{-1 / 2 \sigma_k}} \frac{1}{2 p^{1 + 2 \sigma_k}} \\ 
& \geq \bigl( 1 - \varepsilon/2 \bigr) \sum_{T_{k-1} < p \leq  (1 - \varepsilon/4)^{-1 / 2 \sigma_k}} \frac{1}{2p} \\
& \geq \frac{1 - \varepsilon/2}{2} \log \log T_k + O \bigl( \log \log T_{k-1} \bigr),
\end{align*}
recalling that $\sigma_k = \log \log T_k / \log T_k$. Since $\log \log T_k = \lambda^k$, this lower bound implies that the second term on the right hand side of \eqref{equ:3.09} is summable. Therefore, we just need to show that the first term on the right hand side is not. By standard estimates, we have $\frac{1}{\sqrt{2\pi}}\int_u^\infty e^{-t^2/2} \,du \gg \frac{1}{u} e^{-u^2/2}$ for all $u \geq 1$. Since the above lower bound gives an upper bound for $x$ from \eqref{equ:3.08}, we find that
\begin{align*}
\frac{1}{\sqrt{2\pi}} \int_x^\infty e^{-t^2/2} \, dt &\gg \frac{1}{\log_4 T_k} \exp\biggl( -\frac{(1 - \varepsilon/2)^2 \log_2 T_k \log_4 T_k}{2\sum_{T_{k-1} < p \leq T_k} \frac{1}{2 p^{1 + 2 \sigma_k}} \bigl( \frac{2 \log T_k}{\log p} \bigr)^2 \sin^2 \bigl( \frac{\log p}{2 \log T_k} \bigr) + \frac{1}{8p^{2 + 4 \sigma_k}}} \biggr) \\
& \gg \frac{1}{\log (k \log \lambda)} \exp\biggl( -\frac{(1 - \varepsilon/2) \log_2 T_k \log_4 T_k}{\log_2 T_k + O( \log_2 T_{k-1})}\biggr) \\
& \gg \frac{1}{\log (k \log \lambda)} \exp \biggl( - \frac{(1 - \varepsilon/2)\log (k \log \lambda)}{1 + O( 1/\lambda)} \biggr),
\end{align*}
where all implied constants depend at most on $\varepsilon$. Here we have used the fact that $T_k = \exp(\exp(\lambda^k))$. Taking $\lambda$ sufficiently large in terms of $\varepsilon$, we have
\[ \frac{1}{\sqrt{2\pi}} \int_x^\infty e^{-t^2/2} \, dt \gg \frac{1}{k^{1 - \varepsilon/4}}, \]
which is not summable over $k$. This proves that we almost surely have \[ \sum_{T_{k-1} < p \leq T_k} \frac{2\Re f(p) (\log T_k) \sin\bigl(\frac{\log p}{2 \log T_k}\bigr)}{p^{1/2 + \sigma_k} \log p }  + \frac{\Re f(p)^2}{2p^{1 + 2 \sigma_k}} \geq {(1-\varepsilon/2)\sqrt{\log_2 T_k \log_4 T_k}}, \]
infinitely often. The statement \eqref{equ:3.07} then follows by noting that we can complete the above sum to the whole range $p \leq T_k$, seen as one can apply Probability Result \ref{UEB} very similarly to subsection \ref{s:loiltyperesult} to show that almost surely, for large $k$,
\[ \sum_{p \leq T_{k-1}} \frac{2\Re f(p) (\log T_k) \sin\bigl(\frac{\log p}{2 \log T_k}\bigr)}{p^{1/2 + \sigma_k} \log p }  + \frac{\Re f(p)^2}{2p^{1 + 2 \sigma_k}} \leq \varepsilon/6 \sqrt{\log_2 T_k \log_4 T_{k}}, \]
when $\lambda$ is sufficiently large in terms of $\varepsilon$. This allows us to deduce that almost surely,
\begin{align*}
\sum_{p \leq T_k} \frac{2\Re f(p) (\log T_k) \sin\bigl(\frac{\log p}{2 \log T_k}\bigr)}{p^{1/2 + \sigma_k} \log p }  + \frac{\Re f(p)^2 }{2p^{1 + 2 \sigma_k}} \geq ( 1 - \varepsilon/3) \sqrt{\log_2 T_k \log_4 T_k},
\end{align*}
infinitely often, if $\lambda$ is taken to be sufficiently large in terms of $\varepsilon$. Therefore, \eqref{equ:3.07} holds, completing the proof of Theorem \ref{T:2}.
\end{proof} 
\vspace{0.3cm}
\subsubsection*{Acknowledgements} The author would like to thank his supervisor, Adam Harper, for the suggestion of this problem, for many useful discussions, and for carefully reading an earlier version of this paper.
\printbibliography
\end{document}